\newtheorem{theorem}{Theorem}
\newtheorem{lemma}{Lemma}
\newtheorem{corollary}{Corollary}
\theoremstyle{definition}
\newtheorem{definition}{Definition}
\theoremstyle{remark}
\newtheorem*{ack}{Acknowledgement}
\DeclareMathOperator{\ev}{ev}
\DeclareMathOperator{\Id}{Id}
\DeclareMathOperator{\Hom}{Hom}
\DeclareMathOperator{\End}{End}
\DeclareMathOperator{\real}{Re}
\DeclareMathOperator{\Img}{Im}
\DeclareMathOperator{\tr}{tr}
\begin{document}
\title{Darboux transforms of a harmonic inverse mean curvature surface}
\author{Katsuhiro Moriya\footnote{Division of Mathematics, Faculty of Pure and Applied Sciences, 
University of Tsukuba, 
 1-1-1 Tennodai, Tsukuba, Ibaraki 305-8571, Japan}
}
\date{}
\maketitle
\abstract{
The notion of a generalized harmonic inverse mean curvature surface in the Euclidean four-space 
is introduced. 
A backward B\"{a}cklund transform 
of a generalized harmonic inverse mean curvature 
surface is defined. 
A Darboux transform of a generalized harmonic inverse mean curvature surface is constructed by a backward B\"{a}cklund transform. 
For a given isothermic harmonic inverse mean curvature surface, its classical Darboux transform is a harmonic inverse mean curvature surface. 
Then a transform of a solution to the Painlev\'{e} III equation in trigonometric form is defined by 
a classical Darboux transform of a harmonic inverse mean curvature surface of revolution. 
}
\section{Introduction}
The theory of surfaces is connected with the theory of solitons through 
a compatibility condition of the Gauss-Weingarten equations.  
Bobenko \cite{Bobenko94} gave an outline of eight classes of surfaces in the three-dimensional 
Euclidean space $\mathbb{E}^3$ in the formulation of the theory of solitons. 
They are minimal surfaces, surfaces of constant mean curvature, 
surfaces of constant positive Gaussian curvature, 
surfaces of constant negative Gaussian curvature, 
Bonnet surfaces, 
harmonic inverse mean curvature surfaces, 
Bianchi surfaces and Bianchi surfaces of positive curvature. 
For the investigation of these surfaces, $2\times 2$ matrices representation of quaternions is 
used to write their moving frames. 
Their moving frames are integrated by Sym's formula \cite{Sym85}.  

\textit{Quaternionic analysis} by Pedit and Pinkall \cite{PP98} is a technology to investigate surfaces in the Euclidean three- or four-space which are related to 
the soliton theory. 
In this theory,  
the Euclidean four-space $\mathbb{E}^4$ is modeled on the set of all quaternions $\mathbb{H}$. 
A quaternionic line trivial bundle with a complex structure over a Riemann surface
is associated with a conformal map from the Riemann surface to $\mathbb{E}^4$. 

We can assume that a quaternionic line trivial bundle associated with a \textit{constrained Willmore surface} in $\mathbb{E}^4$ 
equips a harmonic complex structure (\cite{Bohle10}).  
If a constrained Willmore surface is neither minimal nor super-conformal, then 
this complex structure defines a smooth family $\{\nabla^\mu\}_{\mu\in S^1}$ of flat connections on the line bundle. 
Then a \textit{holonomy spectral curve} of a constrained Willmore torus is defined by a smooth family of holonomies of $\{\nabla^\mu\}_{\mu\in S^1}$. 
The relation between a constrained Willmore torus and its holonomy spectral curve is 
discussed in detail in \cite{Bohle10}. 

If a conformal map from a torus to $\mathbb{E}^4$ is not a constrained Willmore torus, 
then the quaternionic line trivial bundle associated with the conformal map
is accompanied with a non-harmonic complex structure.  
For a conformal map, its \textit{Darboux transform} is defined. 
Then all Darboux transforms of a conformal map describe a \textit{multiplier spectral curve} of a conformal map (see \cite{BLPP12,BPP09,Bohle10}). 
The property of a conformal map from a torus to $\mathbb{E}^4$ is related to 
that of its multiplier spectral curve. 
Hence a Darboux transform of a conformal map which is not constrained Willmore 
is an important research subject. 

A \textit{harmonic inverse mean curvature surface} with non-constant mean curvature is a conformal map which is not constrained Willmore. 
We abbreviate a harmonic inverse mean curvature surface as a HIMC surface. 
A HIMC surface is introduced by Bobenko \cite{Bobenko94}. 
A transform of a HIMC surface is defined in \cite{Korotkin99}. 
All \textit{isothermic} HIMC surfaces 
are classified in terms of \textit{Painlev\'{e} transcendents} in \cite{BEK97} (see \cite{BE00}). 
We investigate a Darboux transform of a HIMC surface. 

For a conformal map $\mathfrak{f}\colon M\to\mathbb{H}$, 
a \textit{mean curvature sphere} of $\mathfrak{f}$ is defined by \eqref{CC}. 
We define a function $H\colon M\to\mathbb{H}$ by \eqref{mcs}.  
If $\mathfrak{f}(M)\subset\Img\mathbb{H}$ up to translations in $\mathbb{H}$, then $H$ is real-valued and exactly a mean curvature function of $\mathfrak{f}$ with the induced metric on $M$. 
Hence a HIMC surface is a surface in $\Img\mathbb{H}$ such that $1/H$ is harmonic. 
We call a conformal map $\mathfrak{f}\colon M\to\mathbb{H}$ such that $H^{-1}$ is harmonic a \textit{generalized harmonic inverse mean curvature surface} and 
abbreviate it as a GHIMC surface (Definition \ref{defGHIMC}). 
We advance the theory of GHIMC surfaces in a similar way to that of \textit{Willmore surfaces} in the four-sphere $S^4$ in \cite{BFLPP02}. 

A one-form $w$ is constructed from a mean curvature sphere by \eqref{Hopfw}. 
A constrained Willmore surface in $\mathbb{E}^4$ is a conformal map such that there exists a one-form $\eta$ with $w+\eta$ is closed (\cite{Bohle10}). The one-form $w$ is closed if and only if  
a conformal map is Willmore (\cite{BFLPP02}). 
On the other hand, we have the following. 
\begin{theorem}\label{cond}
Let $L$ be a line bundle of a conformal map with mean curvature sphere $S$ such that
$L_p\neq \{v_\infty\lambda\,|\,\lambda\in\mathbb{H}\}$ for every $p\in M$.  
Set $\mathfrak{f}:=\sigma(L)\colon M\to\mathbb{H}$. 
Define a map $H\colon M\to\mathbb{H}$ by \eqref{mcs} and a one-form $w$ by \eqref{Hopfw}. 
The map $\mathfrak{f}$ is a GHIMC surface if and only 
if 
there exists $n\in\mathbb{H}$ with $n\neq 1$ 
such that $H^{-1}\ast w\,H^{-1}+n\ast dH^{-1}$ is closed.
\end{theorem}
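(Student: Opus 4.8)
The plan is to reduce the ``if and only if'' to a single pointwise identity and then exploit that $\mathbb{H}$ is a division ring. First I would rewrite the one-form in the statement in a more symmetric form. Since $dH^{-1}=-H^{-1}(dH)H^{-1}$ and the Hodge star acts only on the form part, one has $\ast dH^{-1}=-H^{-1}(\ast dH)H^{-1}$, so
\[
H^{-1}\ast w\,H^{-1}+n\ast dH^{-1}=H^{-1}\bigl(\ast w-n\ast dH\bigr)H^{-1}.
\]
This makes the parameter $n$ enter linearly and isolates the quaternionic conjugation by $H^{-1}$ common to both terms.

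The heart of the argument is the claim that
\[
d\bigl(H^{-1}\ast w\,H^{-1}\bigr)=-\,d\ast dH^{-1}
\]
holds identically, that is, the expression with $n=1$ is closed for every conformal map, whether or not it is a GHIMC surface. I would prove this by substituting the definitions \eqref{mcs} of $H$ and \eqref{Hopfw} of $w$ in terms of the mean curvature sphere $S$, applying the Leibniz rule for quaternion-valued forms (with the sign coming from the degree of each factor), and reducing the result with the structure equations of $S$: the relation $S^2=-\Id$, the splitting of $dS$ into its Hopf fields, and the flatness of the associated connection. The $n=1$ closedness reflects, in this normalization, that $w$ is built from $S$ so that its contribution is compensated exactly by $-d\ast dH^{-1}$.

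Granting the identity, the equivalence is immediate. Differentiating and using that $n$ is a constant quaternion gives
\[
d\bigl(H^{-1}\ast w\,H^{-1}+n\ast dH^{-1}\bigr)=-\,d\ast dH^{-1}+n\,d\ast dH^{-1}=(n-1)\,d\ast dH^{-1}.
\]
If $\mathfrak{f}$ is a GHIMC surface then $H^{-1}$ is harmonic, so $d\ast dH^{-1}=0$ and the form is closed for every $n$, in particular for some $n\neq1$. Conversely, if the form is closed for some $n\neq1$, then $(n-1)\,d\ast dH^{-1}=0$; since $n-1\neq0$ is invertible in $\mathbb{H}$, this forces $d\ast dH^{-1}=0$, so $H^{-1}$ is harmonic and $\mathfrak{f}$ is a GHIMC surface. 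The hypothesis $n\neq1$ is exactly what is needed: at $n=1$ the coefficient $n-1$ degenerates and closedness holds automatically, carrying no information about harmonicity.

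I expect the main obstacle to be the identity $d(H^{-1}\ast w\,H^{-1})=-d\ast dH^{-1}$. The difficulty is entirely in the non-commutative bookkeeping: one must track the order of the quaternionic factors through the Leibniz rule, keep the Hodge star and the form degrees consistent, and invoke the structure equations for $S$ in precisely the combination that collapses the many terms into the single expression $-d\ast dH^{-1}$, with no residual two-form surviving. Confirming the absence of such a remainder is what makes the clean ``$n\neq1$'' criterion possible.
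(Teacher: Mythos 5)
Your proposal is correct and follows essentially the same route as the paper: both reduce the theorem to the identity that $H^{-1}\ast w\,H^{-1}+\ast dH^{-1}$ is closed for every conformal map (your $n=1$ case), and then conclude by noting that $d\bigl(H^{-1}\ast w\,H^{-1}+n\ast dH^{-1}\bigr)=(n-1)\,d\ast dH^{-1}$ with $n-1\neq 0$ invertible in $\mathbb{H}$. The step you flag as the main obstacle is easier than you anticipate: rather than going back to the structure equations of $S$, the paper simply substitutes the explicit formula \eqref{Hopfw} for $w$ and uses $dH^{-1}=-H^{-1}\,dH\,H^{-1}$ together with $H^{-1}R=NH^{-1}$ (from $RH=HN$) to obtain the stronger, exact identity $H^{-1}\ast w\,H^{-1}+\ast dH^{-1}=-d(\mathfrak{f}-NH^{-1})$ in a few lines.
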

A \textit{backward B\"{a}cklund transform} of a Willmore surface in \cite{BFLPP02} and \cite{LP05} is 
a B\"{a}cklund transforms in \cite{LP08}. 
Because a backward B\"{a}cklund transform of a Willmore surface is a Willmore surface, 
a sequence of Willmore surfaces is obtained by repeating to take a backward B\"{a}cklund transform (see \cite{LP08}). 

We define a counterpart of a backward B\"{a}cklund transform of a Willmore surface in a GHIMC surface as follows. 
\begin{theorem}\label{Btrans}
Let $L$ be a line bundle of a conformal map with mean curvature sphere $S$ such that
$L_p\neq \{v_\infty\lambda\,|\,\lambda\in\mathbb{H}\}$ for every $p\in M$.  
We assume that $\mathfrak{f}:=\sigma(L)\colon M\to\mathbb{H}$ is a GHIMC surface. 
Define $H\colon M\to\mathbb{H}$ and $N\colon M\to S^2$ by \eqref{mcs}. 
Let $\mu\colon M\to\mathbb{H}$ be a map such that $d\mu=\ast\,dH^{-1}$ and 
$\mathfrak{h}$ a map such that 
defined by 
\begin{gather*}
\mathfrak{h}=\frac{1}{2}(-\mu+\mathfrak{f}-NH^{-1}).
\end{gather*}
Let $\widetilde{M}$ be the subset of $M$ where $\mathfrak{h}$ is defined. 
The quaternionic vector space $\mathfrak{H}$ 
spanned by $v_L$ and $v_L\overline{\mathfrak{h}}$ defines 
a B\"{a}cklund transform of $L$ on $\widetilde{M}$. 
\end{theorem}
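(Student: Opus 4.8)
The plan is to verify directly that the subbundle $\mathfrak{H}$ spanned by $v_L$ and $v_L\overline{\mathfrak{h}}$ satisfies the defining condition of a B\"{a}cklund transform of $L$ in the sense of \cite{LP08}, namely that it is parallel (equivalently, $d$-stable) on $\widetilde{M}$. First I would settle well-definedness. Since $\mathfrak{f}$ is assumed to be a GHIMC surface, $H^{-1}$ is harmonic, so $d\ast dH^{-1}=0$; hence $\ast dH^{-1}$ is a closed one-form and a local primitive $\mu$ with $d\mu=\ast dH^{-1}$ exists. This is exactly what is needed for $\mathfrak{h}=\tfrac12(-\mu+\mathfrak{f}-NH^{-1})$ to be defined, and I would take $\widetilde{M}$ to be the open set on which $H$ is invertible and $\mathfrak{h}$ is finite. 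I would also note that replacing $\mu$ by $\mu+c$ for a constant $c\in\mathbb{H}$ shifts $\mathfrak{h}$ by a constant and therefore does not affect whether $\mathfrak{H}$ is parallel.

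Next I would differentiate the two spanning sections. Writing $d(v_L\overline{\mathfrak{h}})=(dv_L)\overline{\mathfrak{h}}+v_L\,d\overline{\mathfrak{h}}$, and using the definition \eqref{mcs} to express $d\mathfrak{f}$, $N$ and $dN$ through the mean curvature sphere $S$ together with the Hopf one-form $w$ of \eqref{Hopfw}, and substituting $d\mu=\ast dH^{-1}$, I would reduce $d\mathfrak{h}$ to an expression built from $w$, $H^{-1}$, $N$ and $dH^{-1}$. The goal is to show that, modulo terms already lying in $\mathfrak{H}$, the obstruction to $\mathfrak{H}$ being parallel is carried by the single $L$-valued one-form $v_L\bigl(H^{-1}\ast w\,H^{-1}+n\ast dH^{-1}\bigr)$ for a suitable constant $n\in\mathbb{H}$ with $n\neq1$. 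By Theorem \ref{cond} this one-form is closed precisely because $\mathfrak{f}$ is a GHIMC surface, and its closedness is exactly the integrability condition that makes $\mathfrak{H}$ parallel; this is what it means for $\mathfrak{H}$ to define a B\"{a}cklund transform. The geometric content guiding the algebra is that $\mathfrak{f}-NH^{-1}$ is the antipode of $\mathfrak{f}$ on the mean curvature sphere, hence the second envelope of the sphere congruence, so $\mathfrak{h}$ is this second envelope corrected by the conjugate function $\mu$; this is the GHIMC counterpart of the backward B\"{a}cklund transform of a Willmore surface in \cite{BFLPP02,LP05}, which is why the explicit formula, rather than an ODE, suffices.

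The main obstacle I anticipate is the non-commutativity of $\mathbb{H}$ in the step that matches $d\mathfrak{h}$ with the closed form of Theorem \ref{cond}: one must keep careful track of the left/right placement of $N$, $H^{-1}$ and $w$ and identify the precise quaternion $n$ for which $H^{-1}\ast w\,H^{-1}+n\ast dH^{-1}$ is exactly the form appearing in the closedness criterion. A second, subtler point is synchronising the two sources of integrability: the primitive $\mu$ exists because $H^{-1}$ is harmonic, whereas the parallelness of $\mathfrak{H}$ requires the closedness of Theorem \ref{cond}. I would therefore check that the harmonicity used to define $\mu$ is consistent with, and in fact subsumed by, the condition that makes $\mathfrak{H}$ parallel, so that no extra hypothesis beyond $\mathfrak{f}$ being a GHIMC surface is needed on $\widetilde{M}$.
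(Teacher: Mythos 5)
There is a genuine gap in your proposal, and it lies exactly at the heart of the argument. In this paper a B\"{a}cklund transform is defined by a two--dimensional linear system of $D^L$--holomorphic sections of $L$: what must be verified is $D^L(v_L\overline{\mathfrak{h}})=0$, which by Lemma \ref{cBtrans} is equivalent to the pointwise \emph{type} condition $(d\overline{\mathfrak{h}})_R=0$, i.e.\ $\ast\,d\overline{\mathfrak{h}}=-R\,d\overline{\mathfrak{h}}$. This is a first-order algebraic condition on the one-form $d\overline{\mathfrak{h}}$, not an integrability or closedness condition; note that $\mathfrak{H}$ is a space of sections of the line bundle $L$ (all multiples of $v_L$), so there is no subbundle of $\underline{V}$ whose ``parallelness'' or ``$d$-stability'' could be the issue, and closedness of $d\mathfrak{h}$ is automatic because $d\mathfrak{h}$ is exact. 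Your proposal replaces this type condition by the closedness of $H^{-1}\ast w\,H^{-1}+n\ast dH^{-1}$ from Theorem \ref{cond}, but that closedness plays a different role: it is equivalent to the GHIMC hypothesis and is what guarantees the \emph{existence} of the primitive $\mu$ (hence of $\mathfrak{h}$), which you had already, and correctly, used in your first paragraph. Invoking it a second time as ``the integrability condition that makes $\mathfrak{H}$ parallel'' conflates well-definedness of $\mathfrak{h}$ with the B\"{a}cklund property and leaves the actual condition unproved.

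The missing ingredient is Lemma \ref{eta}: $(2\,dH-w)^{-N}=0$, a fact valid for any conformal map (it expresses that the $L$-component of $Qv_\infty$ has the correct type). The paper's proof computes $2\,d\mathfrak{h}=-2\ast dH^{-1}-H^{-1}\ast w\,H^{-1}$, recognizes this as $H^{-1}$ conjugating the $v_L$-component of $4Qv_\infty$, applies Lemma \ref{eta} to get $\ast(\,\cdot\,)=(\,\cdot\,)N$ for that component, and then uses $NH^{-1}=H^{-1}R$ (from $RH=HN$ in \eqref{mcs}) to move the $N$ across $H^{-1}$ and obtain $(d\mathfrak{h})^{-R}=0$; quaternionic conjugation converts this into $(d\overline{\mathfrak{h}})_R=0$, which is why the transform is $\overline{\mathfrak{h}}$ and not $\mathfrak{h}$ --- a point your proposal also does not address. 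Your computation of $d\mathfrak{h}$ in terms of $w$, $H^{-1}$, $N$, $dH^{-1}$ is the right first step, but without Lemma \ref{eta} (or an equivalent statement about the type of $2\,dH-w$) no amount of bookkeeping of left/right factors will produce the required identity $\ast\,d\mathfrak{h}=d\mathfrak{h}\,R$.
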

We call the map $\overline{\mathfrak{h}}$ a backward B\"{a}cklund transform of a GHIMC surface $\mathfrak{f}$. 
It is unclear whether 
a backward B\"{a}cklund transform of a GHIMC surface is a GHIMC surface. 
Hence a sequence of GHIMC surfaces constructed by backward B\"{a}cklund transforms is more involved. 

A backward B\"{a}cklund transform of a GHIMC surface gives 
a Darboux transform of a GHIMC surface as follows. 
\begin{theorem}\label{DTGHIMC}
Let $L$ be a line bundle of a conformal map with mean curvature sphere $S$ such that
$L_p\neq \{v_\infty\lambda\,|\,\lambda\in\mathbb{H}\}$ for every $p\in M$.  
We assume that $\mathfrak{f}:=\sigma(L)\colon M\to\mathbb{H}$ is a GHIMC surface. 
Let $\mathfrak{h}\colon\widetilde{M}\to\mathbb{H}$ be a map such that 
$\overline{\mathfrak{h}}$ is a backward B\"{a}cklund transform of $\mathfrak{f}$ on $\widetilde{M}$. 
Then there exists 
a map $\lambda_\infty\colon\widetilde{M}\to\mathbb{H}$ such that 
$\widehat{\mathfrak{f}}:=\lambda_\infty\overline{\mathfrak{h}}^{-1}+\mathfrak{f}$ is a Darboux transform of $\mathfrak{f}$ on $\widetilde{M}$. 
\end{theorem}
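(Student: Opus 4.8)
The plan is to characterise a Darboux transform of $\mathfrak{f}$ as a line bundle $\widehat{L}$ transversal to $L$ that is parallel for a flat connection $\nabla=d+\omega$, where $\omega\in\Omega^1(\End V)$ has image in $L$ and is built from the Hopf field of $L$; equivalently, writing $\widehat{L}=\psi\,\mathbb{H}$ with $\sigma(\widehat{L})=\widehat{\mathfrak{f}}$, this amounts to $\widehat{\mathfrak{f}}$ solving a Riccati-type first-order equation whose coefficients are built from $d\mathfrak{f}$, $N$ and $H$. Accordingly the argument falls into three parts: extract the candidate section $\psi$ from the B\"{a}cklund data of Theorem \ref{Btrans}, determine the map $\lambda_\infty$, and verify parallelness together with transversality on $\widetilde{M}$.

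First I would produce $\psi$. Since $\mathfrak{H}$ is spanned by $v_L$ and $v_L\overline{\mathfrak{h}}$, the section of $\widehat{L}$ transversal to $L$ is the part of the B\"{a}cklund frame along $v_\infty$; in the affine chart it reads $\psi=v_L+v_\infty\,\lambda_\infty\overline{\mathfrak{h}}^{-1}$, so that $\sigma(\widehat{L})=\lambda_\infty\overline{\mathfrak{h}}^{-1}+\mathfrak{f}=\widehat{\mathfrak{f}}$ and the as-yet-undetermined $v_\infty$-coefficient is precisely the sought $\lambda_\infty\colon\widetilde{M}\to\mathbb{H}$. Differentiating, and using $d\mathfrak{h}=\tfrac12\bigl(-\ast dH^{-1}+d\mathfrak{f}-d(NH^{-1})\bigr)$ from Theorem \ref{Btrans} together with the conformality relation $\ast d\mathfrak{f}=N\,d\mathfrak{f}$, I would rewrite $d\overline{\mathfrak{h}}^{-1}=-\overline{\mathfrak{h}}^{-1}\,(d\overline{\mathfrak{h}})\,\overline{\mathfrak{h}}^{-1}$ purely in terms of $\mathfrak{f}$, $N$, $H$ and the one-form $w$.

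Imposing that $\widehat{L}$ be parallel, that is $(d+\omega)\psi=0$ for a closed $\omega$ with image in $L$, then splits into two conditions. The $L$-part produces an algebraic relation that fixes $\omega$ and thereby the spectral value $\mu$, while the transversal part becomes a linear first-order equation $d\lambda_\infty=\lambda_\infty\,\alpha$ for an $\mathbb{H}$-valued one-form $\alpha$ assembled from the data above. The crucial point is that the integrability (vanishing curvature) of this system, together with closedness of $\omega$, reduces to a GHIMC one-form being closed: by Theorem \ref{cond} the hypothesis that $\mathfrak{f}$ is a GHIMC surface supplies $n\in\mathbb{H}$ with $n\neq 1$ such that $H^{-1}\ast w\,H^{-1}+n\ast dH^{-1}$ is closed, and I expect the integrability form to coincide, after simplification and an admissible gauge $v_L\mapsto v_L\lambda$, with this closed form. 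Granting this, a solution $\lambda_\infty$ exists at least locally on $\widetilde{M}$.

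The identification of the integrability form with the closed one-form of Theorem \ref{cond} is where the real work lies and is the step I expect to be the main obstacle: one must verify that the form emerging from parallelness is, up to the gauge freedom in $v_L$, literally the GHIMC form, so that the single hypothesis that $\mathfrak{f}$ is GHIMC is exactly what is required and no extra condition creeps in. Once this is secured, $\widehat{L}$ is parallel for $d+\omega$; transversality $\widehat{L}\oplus L=V$ holds on $\widetilde{M}$ because $\lambda_\infty\overline{\mathfrak{h}}^{-1}$ is finite and nonzero there; and therefore $\widehat{\mathfrak{f}}=\lambda_\infty\overline{\mathfrak{h}}^{-1}+\mathfrak{f}$ is a Darboux transform of $\mathfrak{f}$, as claimed.
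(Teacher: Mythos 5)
There is a genuine gap: your outline defers exactly the step that would constitute the proof, and the step it defers is not in fact the one the argument needs. The paper's proof is a two-line assembly of results already in place. Since $\overline{\mathfrak{h}}$ is a backward B\"{a}cklund transform, Theorem \ref{Btrans} gives $(d\overline{\mathfrak{h}})_R=0$; combined with $(d\mathfrak{f})^R=0$ this yields $d\mathfrak{f}\wedge d\overline{\mathfrak{h}}=0$ by type, so the one-form $-d\mathfrak{f}\,\overline{\mathfrak{h}}$ is closed and integrates on the simply connected domain to a map $\lambda_\infty$ with $d\lambda_\infty+d\mathfrak{f}\,\overline{\mathfrak{h}}=0$; Lemma \ref{Dtrans} then states precisely that for such a pair $(\lambda_\infty,\lambda_L=\overline{\mathfrak{h}})$ the map $\lambda_\infty\overline{\mathfrak{h}}^{-1}+\mathfrak{f}$ is a Darboux transform, the prolongation $v_\infty\lambda_\infty+v_L\overline{\mathfrak{h}}$ being the required lift with $\pi^{\underline{V}/L}\nabla=0$. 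In your plan the existence of $\lambda_\infty$ is to come from an integrability analysis matching a curvature form against the closed one-form of Theorem \ref{cond}, a computation you explicitly leave open as ``the main obstacle''. That identification is not needed and is the wrong target: the GHIMC hypothesis enters only through Theorem \ref{Btrans} (which is already given, since $\overline{\mathfrak{h}}$ is assumed to be a backward B\"{a}cklund transform), and the equation for $\lambda_\infty$ is $d\lambda_\infty=-d\mathfrak{f}\,\overline{\mathfrak{h}}$, a plain primitive of a closed form, not a linear system $d\lambda_\infty=\lambda_\infty\,\alpha$.

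There is also a concrete error in your setup. The section $\psi=v_L+v_\infty\lambda_\infty\overline{\mathfrak{h}}^{-1}=v_0+v_\infty\widehat{\mathfrak{f}}$ satisfies $\nabla\psi=v_\infty\,d\widehat{\mathfrak{f}}$, and since $(v_\infty,v_L)$ is a frame with $L=\{v_L\lambda\}$, a form $v_\infty\beta$ lies in $\Omega^1(L)$ only if $\beta=0$; hence imposing $(d+\omega)\psi=0$ with $\omega$ valued in $L$ on this normalization forces $\widehat{\mathfrak{f}}$ to be constant. The parallelness condition must be imposed on the rescaled section $\psi\,\overline{\mathfrak{h}}=v_\infty\lambda_\infty+v_L\overline{\mathfrak{h}}$, which is exactly where $d\lambda_\infty+d\mathfrak{f}\,\overline{\mathfrak{h}}=0$ comes from. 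Finally, the paper's definition of a Darboux transform uses only the trivial flat connection $\nabla$ and the prolongation of a $D^{\underline{V}/L}$-holomorphic section; requiring in addition a closed $\omega$ and a spectral value imports conditions from the constrained Willmore setting that are not part of the definition here and would have to be discharged separately.
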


An isothermic surface in $\mathbb{E}^4$ is a surface which has a curvature line coordinate around each 
nonumblic point. 
For an isothermic surface, a \textit{classical} Darboux transform is defined (\cite{CLP}). 
If an isothermic surface is a GHIMC surface or a HIMC surface, then we have the 
following.  
\begin{theorem}\label{DT}
A classical Darboux transform of an isothermic GHIMC surface is a GHIMC surface. 
A classical Darboux transform of an isothermic  HIMC surface is a HIMC surface. 
\end{theorem}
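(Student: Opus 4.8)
The plan is to combine the characterization of GHIMC surfaces in Theorem \ref{cond} with the explicit description of the classical Darboux transform of an isothermic surface. First I would recall that for an isothermic surface $\mathfrak{f}\colon M\to\mathbb{H}$ with Christoffel dual $\mathfrak{f}^c$, a classical Darboux transform $\mathfrak{f}^\sharp$ is produced from a real spectral parameter $m$ together with the closed one-form $d\mathfrak{f}^c$; concretely $\mathfrak{f}^\sharp=\mathfrak{f}+T$, where $T$ solves a Riccati equation coupling $d\mathfrak{f}$ and $d\mathfrak{f}^c$. The structural feature I would exploit is that $\mathfrak{f}^\sharp$ is again isothermic and carries the curvature line net of $\mathfrak{f}$, so the conformal structure underlying the harmonicity of $H^{-1}$ is controlled throughout the transform.

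Next I would compute the transformed data. The mean curvature sphere $S^\sharp$ of $\mathfrak{f}^\sharp$ is obtained from $S$ by conjugation with the section realizing the classical Darboux transform; from this I would read off the transformed map $H^\sharp$ via \eqref{mcs} and the transformed one-form $w^\sharp$ via \eqref{Hopfw}. Using isothermicity, that is, the closedness of $d\mathfrak{f}^c$, I would express $(H^\sharp)^{-1}$ and the product $(H^\sharp)^{-1}\ast w^\sharp\,(H^\sharp)^{-1}$ in terms of $H^{-1}$, $w$, $dH^{-1}$ and $d\mathfrak{f}^c$. Since $\mathfrak{f}$ is GHIMC, Theorem \ref{cond} supplies a quaternionic constant $n\neq 1$ making $H^{-1}\ast w\,H^{-1}+n\ast dH^{-1}$ closed; the aim is to find a corresponding constant $n^\sharp\neq 1$ for which $(H^\sharp)^{-1}\ast w^\sharp\,(H^\sharp)^{-1}+n^\sharp\ast d(H^\sharp)^{-1}$ is closed, whence $\mathfrak{f}^\sharp$ is GHIMC by the converse direction of Theorem \ref{cond}.

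For the HIMC statement I would add one observation. If $\mathfrak{f}(M)\subset\Img\mathbb{H}$ and the spectral parameter of the classical Darboux transform is real, then the Riccati flow defining $\mathfrak{f}^\sharp$ preserves $\Img\mathbb{H}$, so $\mathfrak{f}^\sharp(M)\subset\Img\mathbb{H}$ as well. Consequently $H^\sharp$ is real-valued and coincides with the genuine mean curvature of the transformed surface, and the GHIMC conclusion specializes to the harmonicity of $1/H^\sharp$; that is, $\mathfrak{f}^\sharp$ is HIMC.

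I expect the main obstacle to be the explicit identification of $w^\sharp$ and $H^\sharp$ and the verification of the closedness identity. The difficulty is that the classical Darboux transform alters the mean curvature sphere congruence nontrivially, so $w^\sharp$ is not merely a conjugate of $w$; the closed Christoffel one-form $d\mathfrak{f}^c$ enters the expressions, and one must check that its contributions either cancel or remain closed after the reshuffling. Pinning down the correct value of $n^\sharp$, and confirming $n^\sharp\neq 1$ so that Theorem \ref{cond} applies, is the delicate point, and I would expect it to be forced by matching the $d\mathfrak{f}^c$-terms on the two sides of the closedness equation.
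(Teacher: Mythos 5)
There is a genuine gap: your plan leaves its hardest step undone, and that step is unnecessary once you notice the fact the paper's proof actually turns on. In this paper a Darboux transform $\widehat{L}$ is called \emph{classical} precisely when $L_p\neq\widehat{L}_p$ and the \emph{same} sphere congruence $S$ is the mean curvature sphere of $\widehat{L}$. So your premise that ``the classical Darboux transform alters the mean curvature sphere congruence nontrivially'' is the opposite of what holds here, and the conjugation $S^\sharp$ you propose to compute is just $S$ itself. From this shared $S$, Lemma \ref{DTNR} extracts the identity $\widehat{H}=H$ (via $\widehat{N}-N=(\widehat{\mathfrak{f}}-\mathfrak{f})H$ applied in both directions, using that $\mathfrak{f}$ is in turn a classical Darboux transform of $\widehat{\mathfrak{f}}$). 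Once $\widehat{H}=H$ is in hand, $d\ast d\widehat{H}^{-1}=0$ is \emph{literally the same equation} as $d\ast dH^{-1}=0$, and the GHIMC claim follows from Definition \ref{defGHIMC} alone --- no appeal to Theorem \ref{cond}, no transformed one-form $w^\sharp$, and no constant $n^\sharp$ is needed. Your route through the closedness of $(H^\sharp)^{-1}\ast w^\sharp (H^\sharp)^{-1}+n^\sharp\ast d(H^\sharp)^{-1}$ is where you yourself locate ``the delicate point,'' and you give no argument for how the $d\mathfrak{f}^c$-terms cancel or how $n^\sharp\neq 1$ is forced; as written, the proof does not close.

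For the HIMC statement your idea (reality of the spectral parameter makes the Riccati flow preserve $\Img\mathbb{H}$) is plausible but imported from a different formalism and not justified within this paper's setup. The paper's Corollary \ref{HIMC} gets the same conclusion directly from $\widehat{H}=H$: if $\mathfrak{f}(M)\subset\Img\mathbb{H}$ then $N=R$ and $H$ is real, whence $\widehat{H}=H$ is real and $\widehat{R}=R+\widehat{H}(\widehat{\mathfrak{f}}-\mathfrak{f})=N+(\widehat{\mathfrak{f}}-\mathfrak{f})H=\widehat{N}$, which is exactly the condition for $\widehat{\mathfrak{f}}$ to take values in $\Img\mathbb{H}$ up to translation. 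I would recommend replacing your Theorem \ref{cond} strategy by the single observation $\widehat{H}=H$; it is both the missing idea and the entire proof.
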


We see that if a conformal map $\mathfrak{f}\colon M\to\mathbb{H}$ is \textit{invariant under a subgroup $\mathcal{B}$} of Euclidean motions in $\mathbb{E}^4$, then there is a Darboux transform $\widehat{f}$ invariant under $\mathcal{B}$ (Corollary \ref{inv}). 
A surface of revolution is a surface invariant under group of rotations around an axis. 
All HIMC surfaces of revolution are classified by the solutions to the \textit{Painlev\'{e} III equation in trigonometric form \eqref{PIII}} in \cite{BEK97} (see Theorem \ref{cHIMC}).  
The formula in Theorem \ref{cHIMC} gives a HIMC surface in $\mathbb{E}^3\cong\Img\mathbb{H}$ invariant under 
rotations $\mathcal{R}$ around the $k$-axis in $\Img\mathbb{H}$. 
Then we obtain a transform of the solutions to the Painlev\'{e} III equation in trigonometric form as follows.
\begin{theorem}\label{tPIII}
Let $\phi(x)$ be a solution to the Painlev\'{e} III equation in trigonometric form \eqref{PIII} 
with $\phi^\prime(x)+2\sin(\phi(x))\neq 0$ and 
$\mathfrak{f}\colon \{x+yi\in\mathbb{C}\,|\,x>0\}\to \Img\mathbb{H}$ be a HIMC surface of revolution defined by Theorem \ref{cHIMC}. 
Assume that $\widehat{\mathfrak{f}}:=\lambda_\infty\lambda_L^{-1}+\mathfrak{f}$ is a classical Darboux transform of $\mathfrak{f}$ with $d\lambda_\infty+d\mathfrak{f}\,\lambda_L=0$ and  
$B\lambda_\infty=\lambda_\infty\circ\tau_B$ for every $B\in\mathcal{R}$.
Then $\widehat{\mathfrak{f}}$ is a HIMC surface of revolution invariant under $\mathcal{R}$ and  
the function $\widehat{\phi}(x)$ defined by \eqref{phi} for $\widehat{\mathfrak{f}}$
is a solution to the Painlev\'{e} III equation in trigonometric form \eqref{PIII}. 
\end{theorem}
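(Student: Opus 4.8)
The plan is to combine the general structural result Theorem \ref{DT} with the symmetry statement Corollary \ref{inv} and the classification Theorem \ref{cHIMC}, so that the whole conclusion falls out once $\widehat{\mathfrak{f}}$ has been identified as a HIMC surface of revolution. The argument therefore splits into establishing three things in turn: that $\widehat{\mathfrak{f}}$ is HIMC, that it is invariant under $\mathcal{R}$, and that the function $\widehat{\phi}$ it determines through \eqref{phi} solves \eqref{PIII}.

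First I would record that a HIMC surface of revolution is isothermic. Away from its umbilics the meridians and parallels form a curvature line net, and the given conformal coordinate $x+yi$ on $\{x>0\}$ is, up to reparametrization, a curvature line coordinate, with $x$ the profile parameter and $y$ the angular parameter. Hence $\mathfrak{f}$ is an isothermic HIMC surface, and Theorem \ref{DT} applies verbatim: the classical Darboux transform $\widehat{\mathfrak{f}}=\lambda_\infty\lambda_L^{-1}+\mathfrak{f}$ is again a HIMC surface. This settles the HIMC property of $\widehat{\mathfrak{f}}$ with no further computation.

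Second I would establish the rotational invariance. Since $\mathfrak{f}$ is of revolution about the $k$-axis, each $B\in\mathcal{R}$ acts on the angular coordinate by a shift, so $\tau_B$ fixes $x$ and translates $y$, preserving the domain $\{x>0\}$, and $\mathfrak{f}\circ\tau_B=B\,\mathfrak{f}$ with $B$ acting as a rotation on $\Img\mathbb{H}$. The hypothesis $B\lambda_\infty=\lambda_\infty\circ\tau_B$ together with the linear system $d\lambda_\infty+d\mathfrak{f}\,\lambda_L=0$ is exactly the equivariant data used in Corollary \ref{inv}: differentiating the equivariance of $\lambda_\infty$ and using the invariance of $\mathfrak{f}$ forces $\lambda_L$ (which plays the role of the backward B\"{a}cklund value $\overline{\mathfrak{h}}$ of the invariant surface) to transform compatibly, whence the ratio $\lambda_\infty\lambda_L^{-1}$ transforms as a rotation and $\widehat{\mathfrak{f}}\circ\tau_B=B\,\widehat{\mathfrak{f}}$. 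Thus $\widehat{\mathfrak{f}}$ is invariant under $\mathcal{R}$, i.e.\ a surface of revolution about the $k$-axis. Combining this with the first step, $\widehat{\mathfrak{f}}$ is a HIMC surface of revolution, and Theorem \ref{cHIMC} in its converse direction then guarantees that the function $\widehat{\phi}(x)$ recovered from $\widehat{\mathfrak{f}}$ by \eqref{phi} is a solution of \eqref{PIII}.

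The main obstacle I anticipate is the regularity bookkeeping bundled into the hypothesis $\phi^\prime(x)+2\sin(\phi(x))\neq 0$. One must verify that $\widehat{\mathfrak{f}}$ does not degenerate: that $\lambda_L$ is nowhere zero and that the transform remains on the domain $\widetilde{M}$ on which the backward B\"{a}cklund data $\mathfrak{h}=\tfrac12(-\mu+\mathfrak{f}-NH^{-1})$, and hence the Darboux transform, are defined. This non-vanishing is precisely what $\phi^\prime(x)+2\sin(\phi(x))\neq 0$ encodes through the formula of Theorem \ref{cHIMC} for $H$, $N$ and $dH^{-1}$. A secondary delicate point is checking that an equivariant $\lambda_\infty$ satisfying $d\lambda_\infty+d\mathfrak{f}\,\lambda_L=0$ genuinely exists on the full domain and that the resulting $\widehat{\phi}$ is nonconstant, so that $\widehat{\mathfrak{f}}$ is an honest HIMC surface of revolution rather than a rotationally symmetric degeneration.
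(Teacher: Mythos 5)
Your proposal is correct and follows essentially the same route as the paper: the paper's proof likewise notes that a surface of revolution is isothermic, invokes Theorem \ref{DT} (via Corollary \ref{Dtransr}, which packages Theorem \ref{DT} together with Corollary \ref{inv}) to get that $\widehat{\mathfrak{f}}$ is a HIMC surface invariant under $\mathcal{R}$, and then applies Theorem \ref{cHIMC} to conclude that $\widehat{\phi}$ solves \eqref{PIII}. The only quibble is that the final step uses the \emph{forward} direction of Theorem \ref{cHIMC} (HIMC surface of revolution $\Rightarrow$ $\phi$ solves \eqref{PIII}), not its converse as you wrote; the regularity caveats you raise at the end are reasonable but are not addressed in the paper's proof either.
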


\begin{ack}
This work is supported by JSPS KAKENHI Grant number 22540064. 
\end{ack}
\section{Conformal maps}
We recall a conformal map and a line bundle associated with a conformal map (\cite{BFLPP02}). 

Throughout this paper, we assume that $M$ is a simply-connected Riemann surface with complex structure $J^M$ and maps, vector bundles, sections are smooth. 
An orientation of $M$ is fixed so that, for any non-zero tangent vector $v$ of $M$, 
an ordered pair ($v$, $J^Mv$) is a positive basis. 
We denote by $TM$ the tangent bundle of $M$ and by $T^\ast M$ the 
cotangent bundle of $M$. 

Let $E$ be a vector space. 
We denote by $\Omega^p(E)$ 
the set of $p$-forms on $M$ with values in $E$. 
For a vector bundle $F$ over $M$, we denote by 
$\Omega^p(F)$ the set of $p$-forms on $M$ with values in $F$. 
For $\omega\in\Omega^1(F)$, we define $\ast\,\omega$ by $\ast\,\omega=\omega\circ J^M$. 
We denote by $\underline{E}$ the trivial bundle over $M$. 
Then $\Omega^p(E)$ is naturally identified with $\Omega^p(\underline{E})$.

We model $\mathbb{R}^4$ on the set of all quaternions $\mathbb{H}$ and 
$\mathbb{R}^3$ on the set of all purely imaginary quaternions $\Img\mathbb{H}$. 
For a quaternion $a$, we denote by $\overline{a}$ the quaternionic conjugate of $a$. 
Set $\real a:=(a+\overline{a})/2$ and $\Img a:=(a-\overline{a})/2$.  
Then, the inner product of $a$ and $b\in\mathbb{R}^4\cong \mathbb{H}$ is 
$\langle a,b\rangle=\real(\overline{a}b)=2^{-1}(\overline{a}b+\overline{b}a)$ and 
the norm of $a\in\mathbb{H}$ is $|a|:=(\overline{a}a)^{1/2}$. 
We denote by $\Img\mathbb{H}$ the set of all purely imaginary quaternions. 
If $a$, $b\in\Img\mathbb{H}$, then $ab$ is the vector product $a\times b$. 
Let $S^2$ be the sphere of radius one centered at the origin in $\Img\mathbb{H}$. 
Then, $S^2=\{a\in\Img\mathbb{H}\,|\,a^2=-1\}$. 
Hence, $S^2$ is the set of all square roots of $-1$ in $\Img\mathbb{H}$. 

Let $\omega\in\Omega^1(\mathbb{H})$. 
We define $\omega_N\in \Omega^1(\mathbb{H})$ and $\omega^N\in\Omega^1(\mathbb{H})$ by setting 
\begin{gather*}
\omega_N:=\frac{1}{2}(\omega-N\ast\,\omega),\enskip 
\omega^N:=\frac{1}{2}(\omega-\ast\,\omega\,N). 
\end{gather*}
Then, $\omega$ decomposes because
$\omega=\omega_N+\omega_{-N}=\omega^N+\omega^{-N}$. 
We see that $\ast\,\omega_N=N\,\omega_N$ and $\ast\,\omega^N=\omega\,N$. 
Clearly, $\omega=\omega_N$ if and only if $\omega_{-N}=0$. 
Similarly, $\omega=\omega^N$ if and only if $\omega^{-N}=0$. 
The quaternionic conjugation provides an identity
$\overline{\omega_N}=\overline{\omega}^{-N}$. 
It is known that $\omega^N\wedge\omega_{N}=0$ (\cite{BFLPP02}, Proposition 16). 

Let $E$ and $F$ be right quaternionic vector bundles over a manifold. 
We denote by $\Hom(E,F)$ the real vector bundle of quaternionic bundle homomorphism from $E$ to $F$, 
and $\End(E)$ the real vector bundle of quaternionic bundle endomorphisms of $E$. 
We denote the identity automorphism of $E$ by $\Id$. 
We call $J\in \Omega^0(\End(E))$ a \textit{complex structure} of $E$ if $J^2=-\Id$. 

We denote by $V$ the right quaternionic vector space of dimension two and 
by $\mathbb{P}(V)$ the quaternionic projective space of dimension one.  
We model the conformal four sphere $S^4$ on $\mathbb{P}(V)$.  
Let $\mathcal{T}$ be the tautological line bundle of $\mathbb{P}(V)$. 
We denote the trivial quaternionic vector bundle over $\mathbb{P}(V)$ of rank two by $\mathcal{V}$. 
The tangent bundle $T\mathbb{P}(V)$ of $\mathbb{P}(V)$ is identified with 
a bundle $\Hom(\mathcal{T},\mathcal{V}/\mathcal{T})$. 

We fix a basis $(v_0,v_\infty)$ of $V$. 
Let $V^\ast$ be the dual vector space of $V$. 
There exists uniquely a basis $(v_0^\ast,v_\infty^\ast)$ of $V^\ast$ such that 
\begin{gather*}
v_0^\ast(v_0)=1,\enskip v_0^\ast(v_\infty)=0,\enskip v^\ast_\infty(v_0)=0,\enskip v^\ast_\infty(v_\infty)=1.
\end{gather*}
We define a map $\sigma \colon V\setminus \{v_\infty\lambda\,|\,\lambda\in\mathbb{H}\}\to\mathbb{H}$ by
\begin{gather*}
\sigma(v)=(v_\infty^\ast(v))(v^\ast_0(v))^{-1}
\end{gather*}
for every $v\in V$.   
The map $\sigma$ induces a map from $\mathbb{P}(V)\setminus\{v_\infty\lambda\,|\,\lambda\in\mathbb{H}\}$ to $\mathbb{H}$. 
This map is a \textit{stereographic projection} of $S^4\cong\mathbb{P}(V)$ from 
$\{v_\infty\lambda\,|\,\lambda\in\mathbb{H}\}\in\mathbb{P}(V)$. 

Let $f\colon M\to \mathbb{P}(V)\cong S^4$ be a map and $L:=f^\ast \mathcal{T}$. 
Then $L$ is a quaternionic line subbundle of $\underline{V}$ such that $L_p=f(p)$ for every $p\in M$. 
Conversely, if $L$ is a line subbundle of $\underline{V}$, then there exists a map 
$f\colon M\to \mathbb{P}(V)$ such that $L=f^\ast \mathcal{T}$. 
We call $L$ the \textit{line bundle of a conformal map} $f$. 
We use the terminology of maps for line bundles.

Let  $\pi^{\underline{V}/L}\colon \underline{V}\to \underline{V}/L$ be the projection. 
We consider $v_0$ and $v_\infty$ as sections of $\underline{V}$. 
Define a flat quaternionic connection $\nabla$ on $\underline{V}$ by $\nabla v_0=\nabla v_\infty=0$.  
Then the differential map $df\colon TM\to T\mathbb{P}(V)$ is identified with 
a one-form $\delta:=\pi^{\underline{V}/L}\,\nabla|_{\Omega^0(L)}\in\Omega^1(\Hom(L,\underline{V}/L))$. 
A line bundle $L$ 
is called a line bundle of a conformal map if there exists a complex structure $S\in\Omega^0(\End(\underline{V}))$ such that 
\begin{gather}
SL=L,\enskip \nabla S|_L \in\Omega^1(L),\enskip  \ast\,\delta=\pi^{\underline{V}/L} S\,\nabla|_{\Omega^0(L)}=\delta S|_{\Omega^0(L)}.\label{CC}
\end{gather}
The complex structure $S$ is called the \textit{mean curvature sphere} or the 
\textit{conformal Gauss map} of $L$. 

Let  $L$ be a line bundle of a conformal map with 
mean curvature sphere $S$. 
We assume that $L_p\neq \{v_\infty\lambda\,|\,\lambda\in\mathbb{H}\}$ for every $p\in M$.  
We define a map $\mathfrak{f}\colon M\to\mathbb{H}$ by $\mathfrak{f}(p):=\sigma(L_p)$ for each $p\in M$. 
The section  $v_L:=v_0+v_\infty\mathfrak{f}$ of $L$ does not vanish anywhere. 
We see that 
$(v_\infty,v_L)$ is a frame of $\underline{V}$. 
We call $S$ the mean curvature sphere of $\mathfrak{f}$. 
There exist $H\colon M\to\mathbb{H}$, $N\colon M\to S^2$ and $R\colon M\to S^2$ such that  
\begin{gather}
\begin{gathered}
Sv_\infty=v_\infty N+v_L(-H),\enskip Sv_L=v_L(-R),\\
N^2=R^2=-1,\enskip 
RH=HN,\\
d\mathfrak{f}\,H=(dN)_N,\enskip 
H\,d\mathfrak{f}=(dR)^{-R}. 
\end{gathered}\label{mcs}
\end{gather}
Then $(df)_{-N}=(df)^R=0$. 

Conversely, let $\mathfrak{f}\colon M\to\mathbb{H}$ be a map with 
$(d\mathfrak{f})_{-N}=(d\mathfrak{f})^{R}=0$. 
Then a map $H\colon M\to\mathbb{H}$ is defined 
by \eqref{mcs} except at the branch points or $\mathfrak{f}$.
We define a line bundle $L$ by $L_p=\{(v_0+v_\infty\mathfrak{f}(p))\lambda\,|\,\lambda\in\mathbb{H}\}$ 
for each $p\in M$. Then 
$L$ is a line bundle of a conformal map 
with mean curvature sphere $S$ defined by \eqref{mcs}, except at the branch points of $\mathfrak{f}$.

Let $L$ be a line bundle of a conformal map with 
mean curvature sphere $S$ such that
$L_p\neq \{v_\infty\lambda\,|\,\lambda\in\mathbb{H}\}$ for every $p\in M$.
We induce a (singular) metric on $M$ by $\mathfrak{f}=\sigma(L)$. 
Then $\mathfrak{f}$ is a conformal map except at the branch points of $\mathfrak{f}$. 
The map $\mathfrak{f}$ is called a conformal map with left normal $N$ and right normal $R$. 
Let $\mathcal{H}$ be the mean curvature vector of $\mathfrak{f}$. Then 
$\mathcal{H}=-N\bar{H}=-\bar{H}R$. 
The image of $\mathfrak{f}$ is contained in $\Img\mathbb{H}$ up to translations in $\mathbb{H}$ 
if and only if $N=R$. If the image of $\mathfrak{f}$ is contained in $\Img\mathbb{H}$ up to translations in $\mathbb{H}$, then $H$ is the mean curvature function of $\mathfrak{f}$.  

We define $A$, $Q\in\Omega^1(\End(\underline{V}))$ by 
\begin{gather}
2A=S\,dS+\ast \,dS,\enskip 2Q=S\,dS-\ast \,dS.\label{Hopf}
\end{gather}
The one-forms $A$ and $Q$ are called the \textit{Hopf fields} of $L$. 
We have 
\begin{gather}
\begin{gathered}
4Av_\infty=v_L(-\ast w),\enskip 4Av_L=v_L2R(dR)_{-R},\\
4Qv_\infty=v_\infty 2N(dN)_{-N}+v_L\ast(2\,dH-w), \enskip 4Qv_L=0,\\ 
w=dH+H\ast d\mathfrak{f}\,H+R\ast dH-H\ast dN. 
\end{gathered}\label{Hopfw}
\end{gather}

For $C\in\End(V)$, we set $\langle C\rangle:=\frac{1}{8}\tr_{\mathbb{R}}C$. 
Then an indefinite scalar product $\langle\enskip,\enskip\rangle$ of $\End(V)$ is defined by 
setting $\langle C_1,C_2\rangle:=\langle C_1C_2\rangle$ for 
$C_1$, $C_2\in\End(V)$. 
The functional 
\begin{gather*}
W(L):=\frac{1}{\pi}\int_M\langle A\wedge\ast A\rangle
\end{gather*}
is called the \textit{Willmore functional} of $L$. 
A conformal map whose line bundle is 
a critical line bundle of $W$ is called a \textit{Willmore surface}. 
A critical line bundle is called a \textit{Willmore line bundle}. 
It is known that a line bundle $L$ of a conformal map 
with $L_p\neq\{v_\infty\lambda\,|\,\lambda\in\mathbb{H}\}$ for every $p\in M$ is Willmore if and only if $dw=0$ (\cite{BFLPP02}, Proposition 15). 

The following lemma is proved in \cite{BFLPP02} implicitly. 
\begin{lemma}\label{eta}
We have 
$(2\,dH-w)^{-N}=0$. 
\end{lemma}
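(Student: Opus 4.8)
The plan is to extract the identity directly from the $v_L$-component of the formula for $4Qv_\infty$ in \eqref{Hopfw}, exploiting the algebraic symmetry of the Hopf field $Q$ relative to the mean curvature sphere $S$. The guiding observation is that the $v_L$-component of $4Qv_\infty$ equals $\ast(2\,dH-w)$, so a relation of the shape $\ast(2\,dH-w)=-(2\,dH-w)N$ — which is precisely $(2\,dH-w)^{-N}=0$ — ought to drop out of the way the Hodge star interacts with multiplication by $S$.

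First I would record the symmetry $\ast Q = QS$. Since $S^2=-\Id$, differentiating yields $S\,dS=-dS\,S$, and applying $\ast$ gives $S\ast dS=-\ast dS\,S$. Using $2Q=S\,dS-\ast dS$ one then computes
\begin{gather*}
2QS=S\,dS\,S-\ast dS\,S=dS+S\ast dS=\ast(S\,dS-\ast dS)=2\ast Q,
\end{gather*}
so that $\ast Q=QS$ as elements of $\Omega^1(\End(\underline{V}))$.

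Next I would apply this to the section $v_\infty$. Since $v_\infty$ is $\nabla$-parallel, $(\ast Q)v_\infty=\ast(Qv_\infty)$ and $(QS)v_\infty=Q(Sv_\infty)$. By \eqref{mcs} we have $Sv_\infty=v_\infty N-v_L H$, and by \eqref{Hopfw} we have $Qv_L=0$; hence
\begin{gather*}
\ast(Qv_\infty)=Q(Sv_\infty)=(Qv_\infty)N-(Qv_L)H=(Qv_\infty)N.
\end{gather*}
Multiplying by $4$ and substituting $4Qv_\infty=v_\infty\,2N(dN)_{-N}+v_L\ast(2\,dH-w)$ from \eqref{Hopfw}, I would compare the two sides in the frame $(v_\infty,v_L)$. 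The $v_L$-component gives $\ast\ast(2\,dH-w)=\ast(2\,dH-w)\,N$, and since $\ast\ast=-\Id$ on one-forms this reads $-(2\,dH-w)=\ast(2\,dH-w)\,N$. Recalling that $\omega^{-N}=\tfrac12(\omega+\ast\omega\,N)$, the left-hand side is exactly $(2\,dH-w)^{-N}$, which therefore vanishes.

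The computation is short once the right tool is in hand: the only substantive step is the symmetry $\ast Q=QS$, and the key idea is that applying it to $v_\infty$ together with $Qv_L=0$ converts the star on the $v_L$-component into right multiplication by $N$. I expect the main care to be bookkeeping — using the frame $(v_\infty,v_L)$ to isolate the $v_L$-component cleanly, and invoking $\ast\ast=-\Id$ and the definition of $\omega^{-N}$ with the correct signs.
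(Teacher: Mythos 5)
Your proposal is correct and follows essentially the same route as the paper: both arguments rest on the Hopf-field identity $\ast\, Q=QS$ (which the paper uses implicitly by writing $4Qv_\infty=-\ast\,4QSv_\infty$), applied to $v_\infty$ together with $Qv_L=0$ from \eqref{Hopfw}, and then read off the $v_L$-component in the frame $(v_\infty,v_L)$ to get $\ast\,(2\,dH-w)=(2\,dH-w)N$. The only difference is that you verify $\ast\, Q=QS$ explicitly from \eqref{Hopf}, a step the paper takes for granted.
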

\begin{proof}
We have 
\begin{gather*}
v_\infty 2N(dN)_{-N}+v_L\ast(2\,dH-w)=4Qv_\infty=-\ast\,4QSv_\infty\\
=-\ast\,4Q(v_\infty N+v_L(-H))=(v_\infty(-2(dN)_{-N})+v_L(2\,dH-w))N\\
=v_\infty 2N(dN)_{-N}+v_L(2\,dH-w)N.
\end{gather*}
Hence, $\ast\,(2\,dH-w)=(2\,dH-w)N$.  Then $(2\,dH-w)^{-N}=0$. 
\end{proof}

\section{Transforms}
We recall B\"{a}cklund transforms and Darboux transforms of a conformal map in \cite{LP05, BLPP12}. 

Let $L$ be a line bundle of a conformal map with mean curvature sphere $S$ such that 
$L_p\neq \{v_\infty\lambda\,|\,\lambda\in\mathbb{H}\}$ for every $p\in M$.  
Set $\mathfrak{f}:=\sigma(L)\colon M\to\mathbb{H}$ and $v_L:=v_0+v_\infty\mathfrak{f}$. 
Define $N\colon M\to S^2$, $R\colon M\to S^2$ and $H\colon M\to\mathbb{H}$ by \eqref{mcs}.  
We have a splitting $\underline{V}=L\oplus(\underline{V}/L)$. 
Let $\pi^L\colon \underline{V}\to L$ be the projection. 
We decompose $\nabla|_{\Omega^0(L)}$ by this splitting as 
\begin{gather*}
\nabla|_{\Omega^0(L)}=\nabla^L+\delta,\enskip 
\nabla^L=\pi^L\nabla|_{\Omega^0(L)}. 
\end{gather*}
We see that $\nabla^L$ is a connection of $L$. 
Because $\nabla^Lv_L=\pi^L\nabla v_L=\pi^Lv_\infty\,d\mathfrak{f}=0$, 
the section $v_L$ is a nowhere-vanishing parallel section of $\nabla^L$. 
Hence $\nabla^L$ is a flat connection.  
Set $D^L:=2^{-1}(\nabla^L+S\ast\nabla^L)$. 
The operator $D^L$ is a \textit{quaternionic holomorphic structure} of $L$ (see \cite{LP05}). 
Because $\nabla^L v_L=0$, we have $D^Lv_L=0$. 

Assume that $\lambda_0\colon M\to\mathbb{H}$ and $\lambda_1\colon M\to\mathbb{H}$ be maps such that 
$v_L\lambda_0$ and $v_L\lambda_1$ are linearly independent, 
$D^L(v_L\lambda_0)=D^L(v_L\lambda_1)=0$ 
and $|\lambda_0(p)|+|\lambda_1(p)|\neq 0$ for any $p\in M$. 
Let $\mathfrak{H}$ be the two dimensional vector space spanned by 
$v_L\lambda_0$ and $v_L\lambda_1$. Then $\underline{\mathfrak{H}}$ is a trivial bundle over $M$ of rank two. 
Let $\ev\colon \underline{\mathfrak{H}}\to L$ be the evaluation map defined by 
\begin{gather*}
\ev(p,v_L\lambda_0\mu_0+v_L\lambda_1\mu_1)=(p,(v_L(\lambda_0\mu_0+ \lambda_1\mu_1))(p)). 
\end{gather*}
The map $\ev\colon \underline{\mathfrak{H}}\to L$ is a surjective bundle homomorphism. 
Hence we have an injective bundle homomorphism $\ev^\ast\colon L^{-1}\to \underline{\mathfrak{H}^\ast}$. 
Let $L^\ast$ and $\underline{\mathfrak{H}^\ast}$ be the dual bundles of $L$ and $\underline{\mathfrak{H}}$ respectively. 
Then, we have a line bundle $\ev^\ast(L^\ast)\subset \underline{\mathfrak{H}^\ast}$ of a conformal map.  
The line bundle $\ev^\ast(L^\ast)$ is called the \textit{B\"{a}cklund transform} of $L$ with respect to $\mathfrak{H}$. 

\begin{lemma}\label{cBtrans}
A non-constant map $\lambda\colon M\to\mathbb{H}$ satisfies the equation 
$(d\lambda)_R=0$ if and only if the subspace spanned by 
$v_L$ and $v_L\lambda$ defines a B\"{a}cklund transform of $L$. 
\end{lemma}
\begin{proof}
Recall that $D^Lv_L=0$ and $v_L$ is nowhere-vanishing. 
For a non-constant map $\lambda\colon M\to\mathbb{H}$, we have 
\begin{gather*}
2D^Lv_L\lambda=\nabla^L(v_L\lambda)+S\ast\nabla^L(v_L\lambda)=v_L\,2(d\lambda)_R. 
\end{gather*}
Hence $D^L(v_L\lambda)=0$ if and only if $(d\lambda)_{R}=0$.  
Thus $v_L$ and $v_L\lambda$ defines a B\"{a}cklund transform if and only if $\lambda$ is not constant and $(d\lambda)_{R}=0$.  
\end{proof}
\begin{lemma}\label{BDtrans}
Let $\lambda_\infty\colon M\to\mathbb{H}$ and 
$\lambda_L\colon M\to\mathbb{H}$ be maps satisfying the equation 
$d\lambda_\infty+d\mathfrak{f}\,\lambda_L=0$. 
Then
a subspace of $\Omega^0(L)$ spanned by $v_L$ and $v_L\lambda_L$ 
defines a B\"{a}cklund transform of $L$. 
\end{lemma}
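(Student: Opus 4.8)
The plan is to reduce the statement to Lemma \ref{cBtrans}: once I know that $\lambda_L$ is non-constant and satisfies $(d\lambda_L)_R = 0$, that lemma immediately identifies the span of $v_L$ and $v_L\lambda_L$ as a B\"acklund transform of $L$. The only hypothesis available is the first-order relation $d\lambda_\infty + d\mathfrak{f}\,\lambda_L = 0$, so the first step is to extract an integrability condition from it. Rewriting the hypothesis as $d\lambda_\infty = -d\mathfrak{f}\,\lambda_L$ and applying $d$, the left-hand side is closed and $d(d\mathfrak{f}) = 0$, so the graded Leibniz rule leaves only $d\mathfrak{f}\wedge d\lambda_L = 0$. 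This single algebraic relation is what everything must be squeezed out of.

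Next I would analyse $d\mathfrak{f}\wedge d\lambda_L = 0$ through the type decompositions attached to the normals. Recall from \eqref{mcs} that $d\mathfrak{f}$ is simultaneously of left type $N$ (that is $\ast\,d\mathfrak{f} = N\,d\mathfrak{f}$, from $(d\mathfrak{f})_{-N}=0$) and of right type $-R$ (that is $\ast\,d\mathfrak{f} = -d\mathfrak{f}\,R$, from $(d\mathfrak{f})^R = 0$). Splitting $d\lambda_L = (d\lambda_L)_R + (d\lambda_L)_{-R}$, the component $(d\lambda_L)_{-R}$ is of left type $-R$, so the wedge of the right-type-$(-R)$ form $d\mathfrak{f}$ with it vanishes by the two-distinct-form version of the identity $\omega^N\wedge\omega_N=0$ from \cite{BFLPP02} (a one-line pointwise check: evaluating on $(\partial_x,\partial_y)$ and using the two type relations gives cancelling terms). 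Hence the hypothesis collapses to $d\mathfrak{f}\wedge(d\lambda_L)_R = 0$.

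The hard part will be showing this last relation forces $(d\lambda_L)_R = 0$ rather than being automatic. Working in a conformal coordinate and setting $a := d\mathfrak{f}(\partial_x)$ and $b := (d\lambda_L)_R(\partial_x)$, the type conditions give $d\mathfrak{f}(\partial_y) = -aR$ and $(d\lambda_L)_R(\partial_y) = Rb$, so a direct evaluation yields $d\mathfrak{f}\wedge(d\lambda_L)_R = 2\,aRb\;dx\wedge dy$. Since $\mathfrak{f}$ is an immersion away from its isolated branch points, $a\neq 0$ there, and because $\mathbb{H}$ is a division algebra $aRb = 0$ forces $b=0$; thus $(d\lambda_L)_R = 0$ off the branch locus and, by smoothness, everywhere on $M$. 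The subtlety to keep in mind is precisely that the two surviving components behave oppositely under the wedge with $d\mathfrak{f}$ — the $-R$ part drops out for free while the $R$ part is detected with a nonzero coefficient — which is exactly what makes the vanishing of the wedge informative rather than vacuous.

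Finally, $(d\lambda_L)_R = 0$ together with $\lambda_L$ non-constant (if $\lambda_L$ were constant then $v_L$ and $v_L\lambda_L$ would be linearly dependent and no transform would be defined, so this is understood) lets me invoke Lemma \ref{cBtrans} to conclude that the subspace spanned by $v_L$ and $v_L\lambda_L$ is a B\"acklund transform of $L$, which is the assertion of Lemma \ref{BDtrans}. I expect the computation annihilating the $-R$ component and the pointwise division-algebra argument to be the only places needing genuine care; everything else is bookkeeping with the left/right type decomposition.
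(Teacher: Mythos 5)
Your proposal is correct and follows the same route as the paper: differentiate $d\lambda_\infty+d\mathfrak{f}\,\lambda_L=0$ to get $d\mathfrak{f}\wedge d\lambda_L=0$, deduce $(d\lambda_L)_R=0$, and invoke Lemma \ref{cBtrans}. The paper's proof simply asserts the middle implication, whereas you justify it via the type decomposition and the division-algebra argument (and you also handle the non-constancy hypothesis of Lemma \ref{cBtrans}, which the paper glosses over); these details are correct.
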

\begin{proof}
We have $0=d(d\lambda_\infty)=d(-d\mathfrak{f}\,\lambda_L)=d\mathfrak{f}\wedge d\lambda_L$. 
Hence $(d\lambda_L)_R=0$. 
By Lemma \ref{Btrans}, 
a subspace of $\Omega^0(L)$ spanned by $v_L$ and $v_L\lambda_L$ 
defines a B\"{a}cklund transform of $L$. 
\end{proof}
For $\phi\in\Omega^0(\underline{V}/L)$, we denote by $\tilde{\phi}\in\Omega^0(\underline{V})$ 
a lift of $\phi$, that is $\pi^{\underline{V}/L}\tilde{\phi}=\phi$. 
Set 
\begin{gather*}
D^{\underline{V}/L}\phi:=\pi^{\underline{V}/L}\left(\frac{1}{2}(\nabla\tilde{\phi}+S\ast\,\nabla\tilde{\phi})\right).
\end{gather*}
The operator $D^{\underline{V}/L}$ is a quaternionic holomorphic structure of $\underline{V}/L$. 
It is known that if $D^{\underline{V}/L}\phi=0$, then there exists a unique lift $\widehat{\phi}$ such that 
$\pi^{\underline{V}/L}\nabla\widehat{\phi}=0$. 
Then, a line bundle 
$\widehat{L}$ with $\widehat{L}_p=\{\widehat{\phi}(p)\lambda\,|\,\lambda\in \mathbb{H}\}$ for every $p\in M$ away from zeros of $\widehat{\phi}$   
is a line bundle of a conformal map.  
Set $\widehat{\delta}:=\pi^{\underline{V}/\widehat{L}}\nabla$. 
Then $S\widehat{L}=\widehat{L}$ and 
$\ast\,\widehat{\delta}=S\,\widehat{\delta}$. 
The line bundle $\widehat{L}$ of a conformal map is called 
a \textit{Darboux transform} of $L$ with respect to $\phi$. 
We call the map $\widehat{\mathfrak{f}}=\sigma(\widehat{L})$ a \textit{Darboux transform} of 
$\mathfrak{f}$ with respect to $\phi$.

A Darboux transform $\widehat{L}$ is called \textit{classical} if $L_p\neq \widehat{L}_p$ for each $p\in M$ and $S$ is the mean curvature sphere of $\widehat{L}$ (\cite{CLP}). 
Let $\widehat{S}$ be the mean curvature sphere of $\widehat{L}$ and set $\widehat{\mathfrak{f}}:=\sigma(\widehat{L})$. 
If $\widehat{L}$ is a classical Darboux transform, then 
$\mathfrak{f}$ and $\widehat{\mathfrak{f}}$ share conformal curvature line parametrizations. The conformal maps $\mathfrak{f}$ and $\widehat{\mathfrak{f}}$ are 
called \textit{isothermic surfaces} (see \cite{CLP}).  

The equation $d\lambda_\infty+d\mathfrak{f}\,\lambda_L=0$ is related with a Darboux transform as follows. 
\begin{lemma}\label{Dtrans}
If maps $\lambda_\infty\colon M\to\mathbb{H}$ and 
$\lambda_L\colon M\to\mathbb{H}$ satisfy the equation 
$d\lambda_\infty+d\mathfrak{f}\,\lambda_L=0$, then
$\widehat{\mathfrak{f}}=\lambda_\infty\lambda_L^{-1}+\mathfrak{f}$ is a Darboux transform of $\mathfrak{f}$ with respect to $v_\infty\lambda_\infty$ and 
$\overline{\mathfrak{f}}$ is a Darboux transform of $\overline{\widehat{\mathfrak{f}}}$ 
with respect to $v_\infty\overline{\lambda_L^{-1}}$. 
\end{lemma}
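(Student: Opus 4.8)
The plan is to exhibit, for the first assertion, the explicit section whose span is the line bundle $\widehat{L}$, and then to deduce the second assertion by applying the first to the conformal map $\overline{\widehat{\mathfrak{f}}}$. First I would set $\phi:=\pi^{\underline{V}/L}(v_\infty\lambda_\infty)$ and propose $\widehat\phi:=v_\infty\lambda_\infty+v_L\lambda_L$ as a lift of $\phi$ to $\underline{V}$. Since $\nabla v_\infty=0$ and $\nabla v_L=v_\infty\,d\mathfrak{f}$, I compute
\begin{gather*}
\nabla\widehat\phi=v_\infty(d\lambda_\infty+d\mathfrak{f}\,\lambda_L)+v_L\,d\lambda_L=v_L\,d\lambda_L,
\end{gather*}
where the hypothesis $d\lambda_\infty+d\mathfrak{f}\,\lambda_L=0$ annihilates the $v_\infty$-term. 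Hence $\nabla\widehat\phi\in\Omega^1(L)$, so $\pi^{\underline{V}/L}\nabla\widehat\phi=0$ and $\widehat\phi$ is the distinguished lift of $\phi$, provided $D^{\underline{V}/L}\phi=0$.

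To verify $D^{\underline{V}/L}\phi=0$ I would use the lift $v_\infty\lambda_\infty$, for which $\nabla(v_\infty\lambda_\infty)=v_\infty\,d\lambda_\infty$, together with $Sv_\infty=v_\infty N+v_L(-H)$ from \eqref{mcs}. After applying $\pi^{\underline{V}/L}$ the $v_L$-terms drop out, leaving
\begin{gather*}
2D^{\underline{V}/L}\phi=\pi^{\underline{V}/L}(v_\infty)\,(d\lambda_\infty+N\ast d\lambda_\infty)=2\,\pi^{\underline{V}/L}(v_\infty)\,(d\lambda_\infty)_{-N}.
\end{gather*}
The hypothesis gives $(d\lambda_\infty)_{-N}=-(d\mathfrak{f}\,\lambda_L)_{-N}=-(d\mathfrak{f})_{-N}\lambda_L$, which vanishes since $(d\mathfrak{f})_{-N}=0$. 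Thus $\widehat\phi$ spans $\widehat{L}$, and rewriting $\widehat\phi=v_0\lambda_L+v_\infty(\lambda_\infty+\mathfrak{f}\,\lambda_L)$ gives $\sigma(\widehat{L})=(\lambda_\infty+\mathfrak{f}\,\lambda_L)\lambda_L^{-1}=\lambda_\infty\lambda_L^{-1}+\mathfrak{f}=\widehat{\mathfrak{f}}$, establishing the first assertion.

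For the second assertion I would first record that substituting $d\lambda_\infty=-d\mathfrak{f}\,\lambda_L$ into $d\widehat{\mathfrak{f}}=(d\lambda_\infty)\lambda_L^{-1}+\lambda_\infty\,d(\lambda_L^{-1})+d\mathfrak{f}$ collapses the expression to $d\widehat{\mathfrak{f}}=\lambda_\infty\,d(\lambda_L^{-1})$. Because conjugation carries a conformal map to a conformal map (by the identity $\overline{\omega_N}=\overline{\omega}^{-N}$ of Section 2), $g:=\overline{\widehat{\mathfrak{f}}}$ is again a conformal map, so I may apply the first assertion to $g$ with the data $\mu_\infty:=\overline{\lambda_L^{-1}}$ and $\mu_L:=-\overline{\lambda_\infty^{-1}}$. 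The defining equation holds because $dg=\overline{d(\lambda_L^{-1})}\,\overline{\lambda_\infty}$, whence $dg\,\mu_L=-\overline{d(\lambda_L^{-1})}=-d\mu_\infty$. Therefore $\mu_\infty\mu_L^{-1}+g$ is a Darboux transform of $\overline{\widehat{\mathfrak{f}}}$ with respect to $v_\infty\mu_\infty=v_\infty\overline{\lambda_L^{-1}}$, and $\mu_\infty\mu_L^{-1}=-\overline{\lambda_\infty\lambda_L^{-1}}$ gives $\mu_\infty\mu_L^{-1}+g=\overline{\mathfrak{f}}$, as required.

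The step I expect to be the main obstacle is the verification that $D^{\underline{V}/L}\phi=0$: it reduces to showing that the $(-N)$-part of $d\lambda_\infty$ vanishes, and this is precisely where the conformality of $\mathfrak{f}$ in the form $(d\mathfrak{f})_{-N}=0$ enters, combined with the fact that right multiplication by the quaternion-valued function $\lambda_L$ commutes with the $(-N)$-projection. A secondary point requiring care is that the appeal to the first assertion for $\overline{\widehat{\mathfrak{f}}}$ presupposes that $\overline{\widehat{\mathfrak{f}}}$ is genuinely a conformal map with its own left normal; I would settle this once using the conjugation identity $\overline{\omega_N}=\overline{\omega}^{-N}$, which shows that conjugation interchanges and negates the left and right normals.
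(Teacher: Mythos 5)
Your proposal is correct and follows essentially the same route as the paper: exhibit $v_\infty\lambda_\infty+v_L\lambda_L$ as the parallel lift, check $D^{\underline{V}/L}(v_\infty\lambda_\infty)=0$ via the $(-N)$-projection, read off $\sigma(\widehat{L})=\lambda_\infty\lambda_L^{-1}+\mathfrak{f}$, and obtain the second assertion by conjugating $d\widehat{\mathfrak{f}}=\lambda_\infty\,d\lambda_L^{-1}$ and reapplying the first part. Your explicit verification that $(d\lambda_\infty)_{-N}=-(d\mathfrak{f})_{-N}\lambda_L=0$ is a detail the paper leaves implicit, but it is not a different argument.
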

\begin{proof}
We identify $\underline{V}/L$ with $\underline{\ker v_0^\ast}=\underline{\{v_\infty\lambda\,|\,\lambda\in\mathbb{H}\}}$.
For $v_\infty\lambda_\infty\in\Omega^0(\underline{\ker v_0^\ast})\cong\Omega^0(\underline{V}/L)$ with $\lambda_\infty\colon M\to\mathbb{H}$, we have 
\begin{gather*}
2D^{\underline{V}/L}v_\infty\lambda_\infty=\pi^{\underline{V}/L}(v_\infty\,d\lambda_\infty+Sv_\infty\,\ast\,d\lambda_\infty)\\
=\pi^{\underline{V}/L}(v_\infty(d\lambda_\infty+N\ast d\lambda_\infty))=v_\infty2(d\lambda_\infty)_{-N}. 
\end{gather*}
Hence $D^{\underline{V}/L}v_\infty\lambda_\infty=0$ if and only if $(d\lambda_\infty)_{-N}=0$. 

A lift of $\phi=v_\infty\lambda_\infty$ is written as $\tilde{\phi}=v_\infty\lambda_\infty+v_L\lambda_L$ for $\lambda_L\colon M\to\mathbb{H}$. 
Because $\sigma(L_p)=\mathfrak{f}(p)$ for every $p\in M$, we have
\begin{gather*}
\pi^{V/L}\nabla\tilde{\phi}=\pi^{V/L}(v_\infty\,d\lambda_\infty+v_\infty\,d\mathfrak{f}\,\lambda_L+v_L\,d\lambda_L)
=v_\infty(d\lambda_\infty+d\mathfrak{f}\,\lambda_L).
\end{gather*} 
Hence $\tilde{\phi}=v_\infty\lambda_\infty+v_L\lambda_L$ satisfies the equation $\pi^{V/L}\nabla\tilde{\phi}=0$ if and only if $d\lambda_\infty+d\mathfrak{f}\,\lambda_L=0$. 
Thus
if maps $\lambda_\infty\colon M\to\mathbb{H}$ and $\lambda_L\colon M\to\mathbb{H}$ satisfy the equation 
$d\lambda_\infty+d\mathfrak{f}\,\lambda_L=0$,
then a line bundle 
$\widehat{L}$ with $\widehat{L}_p=\{\tilde{\phi}(p)\lambda\,|\,\lambda\in \mathbb{H}\}$ for every $p\in M$ away from zero of $\tilde{\phi}$ is a Darboux transform of $L$ with respect to $v_\infty\lambda_\infty$. 
Because 
\begin{gather*}
\tilde{\phi}=v_\infty\lambda_\infty+v_L\lambda_L=v_0\lambda_L+v_\infty(\lambda_\infty+\mathfrak{f}\lambda_L),
\end{gather*}
we have $\sigma(\widehat{L})=(\lambda_\infty+\mathfrak{f}\,\lambda_L)\lambda_L^{-1}=\lambda_\infty\lambda_L^{-1}+\mathfrak{f}=\widehat{\mathfrak{f}}$. 

We have $d\widehat{\mathfrak{f}}=\lambda_\infty\,d\lambda_L^{-1}$. 
Hence $d\overline{\lambda_L^{-1}}+d\overline{\widehat{\mathfrak{f}}}\,(-\overline{\lambda_\infty^{-1}})=0$. 
Then $\overline{\lambda_L^{-1}}(-\overline{\lambda_\infty})+\overline{\widehat{\mathfrak{f}}}=\overline{\mathfrak{f}}$ is a Darboux transform of $\overline{\widehat{\mathfrak{f}}}$ with respect to $v_\infty\overline{\lambda_L^{-1}}$. 
\end{proof}

We have the following by straightforward calculation. 
\begin{lemma}\label{DTNR}
Define $v_{\widehat{L}}$, $T_\infty$, $T_L$, $\widehat{N}$, $\widehat{R}$ and $\widehat{H}$ by 
\begin{gather}
\begin{gathered}
v_{\widehat{L}}=v_0+v_\infty\widehat{\mathfrak{f}}=v_\infty T_\infty+v_LT_L,\\
\widehat{S}v_\infty=v_\infty \widehat{N}+v_{\widehat{L}}(-\widehat{H}),\enskip \widehat{S}v_{\widehat{L}}=v_{\widehat{L}}(-\widehat{R}),\\
\widehat{N}^2=\widehat{R}^2=-1,\enskip 
\widehat{R}\widehat{H}=\widehat{H}\widehat{N},\\
d\widehat{\mathfrak{f}}\,\widehat{H}=(d\widehat{N})_{\widehat{N}},\enskip 
\widehat{H}\,d\widehat{\mathfrak{f}}=(d\widehat{R})^{-\widehat{R}}. 
\end{gathered}
\label{DTmcs}
\end{gather}
Then, 
\begin{gather*}
T_L=1,\enskip T_\infty=\widehat{\mathfrak{f}}-\mathfrak{f},\\
N T_\infty + T_\infty H T_\infty + T_\infty R=0,\\
\widehat{N}-N=(\widehat{\mathfrak{f}}-\mathfrak{f})H,\enskip \widehat{R}-R=\widehat{H}(\widehat{\mathfrak{f}}-\mathfrak{f}). 
\end{gather*}
The map $\mathfrak{f}$ is an isothermic surface and $\widehat{\mathfrak{f}}$ is a classical Darboux transform 
if and only if $\widehat{R}=-T_\infty^{-1}NT_\infty$. 
If a map $\mathfrak{f}$ is an isothermic surface and $\widehat{\mathfrak{f}}$ is a classical Darboux transform then $\widehat{H}=H$. 
\end{lemma}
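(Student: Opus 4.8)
The plan is to carry out every computation in the frame $(v_\infty,v_{\widehat L})$ of $\underline V$, exploiting two structural facts: a Darboux transform carries the \emph{old} complex structure, $S\widehat L=\widehat L$ with $\ast\widehat\delta=S\widehat\delta$, and conjugation interchanges the pair of Darboux transforms produced in Lemma \ref{Dtrans}. First I would substitute $v_0=v_L-v_\infty\mathfrak f$ into $v_{\widehat L}=v_0+v_\infty\widehat{\mathfrak f}$ to get $v_{\widehat L}=v_L+v_\infty(\widehat{\mathfrak f}-\mathfrak f)$, so $T_L=1$ and $T_\infty=\widehat{\mathfrak f}-\mathfrak f$. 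Applying $S$ to $v_{\widehat L}=v_\infty T_\infty+v_L$ through \eqref{mcs} and then writing $v_L=v_{\widehat L}-v_\infty T_\infty$ collects the result as $Sv_{\widehat L}=v_\infty\bigl(NT_\infty+T_\infty HT_\infty+T_\infty R\bigr)-v_{\widehat L}(HT_\infty+R)$. Since $S\widehat L=\widehat L$, the vector $Sv_{\widehat L}$ is a multiple of $v_{\widehat L}$, so the $v_\infty$-coefficient must vanish; this is exactly $NT_\infty+T_\infty HT_\infty+T_\infty R=0$, and the remaining term records $Sv_{\widehat L}=v_{\widehat L}(-(HT_\infty+R))$.

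For the left normal I would use that $\widehat S$ is the mean curvature sphere of $\widehat L$ while $S$ satisfies the Darboux condition $\ast\widehat\delta=S\widehat\delta$. Because $\nabla v_{\widehat L}=v_\infty\,d\widehat{\mathfrak f}$ and $Sv_\infty=v_\infty(N+T_\infty H)+v_{\widehat L}(-H)$, evaluating $\ast\widehat\delta=S\widehat\delta$ on $v_{\widehat L}$ yields $\ast d\widehat{\mathfrak f}=(N+T_\infty H)\,d\widehat{\mathfrak f}$. Comparing with the defining relation $\ast d\widehat{\mathfrak f}=\widehat N\,d\widehat{\mathfrak f}$ coming from \eqref{DTmcs} and cancelling the nowhere-vanishing $d\widehat{\mathfrak f}$ gives $\widehat N=N+T_\infty H$, that is $\widehat N-N=(\widehat{\mathfrak f}-\mathfrak f)H$.

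The right normal is the step I expect to be the main obstacle, since $S$ and $\widehat S$ need \emph{not} agree on $\widehat L$, so a priori $\widehat R\neq HT_\infty+R$. Here I would pass to the conjugate Darboux transform of Lemma \ref{Dtrans}, in which $\overline{\mathfrak f}$ is a Darboux transform of $\overline{\widehat{\mathfrak f}}$. Conjugation interchanges left and right normals, so the left normal of $\overline{\mathfrak f}$ is $R$ and that of $\overline{\widehat{\mathfrak f}}$ is $\widehat R$, and it sends the mean curvature function to $-\overline H$ (a one-line check from \eqref{mcs} together with $\overline{\omega_N}=\overline{\omega}^{-N}$). Applying the left-normal identity just proved to this conjugate transform gives $R-\widehat R=(-\overline{T_\infty})(-\overline{\widehat H})=\overline{\widehat H\,T_\infty}$, hence $\widehat R-R=-\overline{\widehat H\,T_\infty}$. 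As $\widehat R-R\in\Img\mathbb H$, the quaternion $\widehat H\,T_\infty$ is purely imaginary, so $-\overline{\widehat H\,T_\infty}=\widehat H\,T_\infty$ and $\widehat R-R=\widehat H(\widehat{\mathfrak f}-\mathfrak f)$.

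Finally, $\widehat{\mathfrak f}$ is by definition a classical Darboux transform exactly when $\widehat S=S$. If this holds, comparing $\widehat Sv_\infty$ with $Sv_\infty$ forces $\widehat H=H$ (the last assertion), and comparing $\widehat Sv_{\widehat L}$ with $Sv_{\widehat L}=v_{\widehat L}(-(HT_\infty+R))$ gives $\widehat R=HT_\infty+R$, which the constraint of the first paragraph rewrites as $-T_\infty^{-1}NT_\infty$. Conversely, if $\widehat R=-T_\infty^{-1}NT_\infty=HT_\infty+R$ then $\widehat R-R=HT_\infty$, while the right-normal identity gives $\widehat R-R=\widehat H\,T_\infty$; cancelling the invertible $T_\infty$ yields $\widehat H=H$, whereupon $\widehat N=N+T_\infty H$, $\widehat R=HT_\infty+R$ and $\widehat H=H$ make $\widehat S$ agree with $S$ on the frame $(v_\infty,v_{\widehat L})$, so $\widehat S=S$, $\widehat{\mathfrak f}$ is a classical Darboux transform, and the two surfaces are isothermic.
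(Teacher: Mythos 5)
Your proposal is correct and follows essentially the same route as the paper: the same frame computation giving $T_L=1$ and $T_\infty=\widehat{\mathfrak{f}}-\mathfrak{f}$, the same use of $S\widehat{L}=\widehat{L}$ to kill the $v_\infty$-coefficient of $Sv_{\widehat{L}}$, the same use of $\ast\widehat{\delta}=S\widehat{\delta}$ for $\widehat{N}-N$, and the same conjugation trick via Lemma \ref{Dtrans} (with mean curvature $-\overline{H}$ and swapped normals) for $\widehat{R}-R$. The only, harmless, deviation is in how $\widehat{H}=H$ is obtained: you read it off from $\widehat{S}=S$ on the frame $(v_\infty,v_{\widehat{L}})$ and, in the converse, cancel $T_\infty$ in $\widehat{H}T_\infty=HT_\infty$, whereas the paper invokes the symmetry of the classical Darboux relation to get $N-\widehat{N}=(\mathfrak{f}-\widehat{\mathfrak{f}})\widehat{H}$ and compares with $\widehat{N}-N=(\widehat{\mathfrak{f}}-\mathfrak{f})H$.
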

\begin{proof}
Because
\begin{gather*}
v_\infty T_\infty+v_LT_L=v_\infty T_\infty+(v_0+v_\infty\mathfrak{f})T_L
=v_0T_L+v_\infty(T_\infty+\mathfrak{f}T_L),
\end{gather*}
we have $T_L=1$ and $T_\infty=\widehat{\mathfrak{f}}-\mathfrak{f}$. 

Because
\begin{gather*}
S\widehat{L}=\widehat{L},\\
Sv_{\widehat{L}}=S(v_\infty T_\infty+v_L)
=(v_\infty N+v_L(-H)) T_\infty +v_L(-R)\\
=v_\infty N T_\infty +v_L(-H T_\infty -R)\\
=v_\infty N T_\infty +(-v_\infty T_\infty +v_{\widehat{L}})(-H T_\infty -R)\\
=v_\infty (N T_\infty + T_\infty H T_\infty + T_\infty R)
+v_{\widehat{L}}(-HT_\infty -R), 
\end{gather*}
we have $N T_\infty + T_\infty H T_\infty + T_\infty R=0$. 
Because 
\begin{gather*}
-HT_\infty -R=-T_\infty^{-1}(T_\infty HT_\infty +T_\infty R)=T_\infty^{-1}NT_\infty, 
\end{gather*}
we have  
$Sv_{\widehat{L}}=v_{\widehat{L}}T_\infty^{-1}NT_\infty$.  

Because $\ast\,\widehat{\delta}=\widehat{S}\widehat{\delta}=S\widehat{\delta}$, we have 
\begin{gather*}
\pi^{\underline{V}/\widehat{L}}\widehat{S}v_\infty=\pi^{\underline{V}/\widehat{L}}Sv_\infty,\\
Sv_\infty=v_\infty N+v_L(-H)
=v_\infty N+(-v_\infty T_\infty +v_{\widehat{L}})(-H)\\
=v_\infty(N+T_\infty H)+v_{\widehat{L}}(-H),\\
\widehat{S}v_\infty=v_\infty \widehat{N}+v_{\widehat{L}}(-\widehat{H}).
\end{gather*}
Hence $\widehat{N}-N=(\widehat{\mathfrak{f}}-\mathfrak{f})H$. 

By Lemma \ref{Dtrans}, a conformal map $\overline{\mathfrak{f}}$ is a Darboux transform of $\overline{\widehat{\mathfrak{f}}}$. 
Because $d\overline{\mathfrak{f}}(-\overline{H})=(dR)_R$ and 
$d\overline{\widehat{\mathfrak{f}}}\,(-\overline{\widehat{H}})=(d\widehat{R})_{\widehat{R}}$, we have $R-\widehat{R}=(\overline{\mathfrak{f}}-\overline{\widehat{\mathfrak{f}}})(-\overline{\widehat{H}})$. 
Then $\widehat{R}-R=\widehat{H}(\widehat{\mathfrak{f}}-\mathfrak{f})$. 

We assume that $\mathfrak{f}$ is an isothermic surface and a Darboux transform $\widehat{L}$ 
of $L$ is classical. 

We have 
\begin{gather*}
\widehat{\delta}Sv_{\widehat{L}}=\widehat{\delta}v_{\widehat{L}}T_\infty^{-1}NT_\infty
=v_\infty\,d\widehat{f}\,T_\infty^{-1}NT_\infty,\\
\widehat{\delta}\widehat{S}v_{\widehat{L}}=\widehat{\delta}v_{\widehat{L}}(-\widehat{R})=v_\infty\,d\widehat{f}\,(-\widehat{R}). 
\end{gather*}
Hence $\widehat{R}=-T_\infty^{-1}NT_\infty$. 

Conversely, if $\widehat{R}=-T_\infty^{-1}NT_\infty$, then $\ast\,\widehat{\delta}=\widehat{\delta}\widehat{S}=\widehat{\delta}S$. 
Because $\widehat{L}$ is a Darboux transform of $L$, we have 
$\widehat{S}\widehat{\delta}=S\widehat{\delta}$. Hence 
$\widehat{L}$ is a classical Darboux transform of $L$. 

If $\mathfrak{f}$ is an isothermic surface and $\widehat{\mathfrak{f}}$ is a classical Darboux transform, 
then $\mathfrak{f}$ is a classical Darboux transform of $\widehat{\mathfrak{f}}$. 
Hence $N-\widehat{N}=(\mathfrak{f}-\widehat{\mathfrak{f}})\widehat{H}$. 
Then $\widehat{H}=H$. 
\end{proof}
A pair of two conformal maps $\mathfrak{f}$ and $\mathfrak{g}$ is called a \textit{Christoffel pair} in 
\cite{HP97} if $d\overline{\mathfrak{f}}\wedge d\mathfrak{g}=d\mathfrak{g}\wedge d\overline{\mathfrak{f}}=0$. 
It is known that if a pair $\mathfrak{f}$ and $\mathfrak{g}$ is a Christoffel pair, 
then maps $\mathfrak{f}$ and $\mathfrak{g}$ are isothermic. 
If a map $\mathfrak{f}$ is isothermic, then 
there exists a map $\mathfrak{g}$ such that a pair of $\mathfrak{f}$ and $\mathfrak{g}$ is a Christoffel pair (see \cite{HP97}). 

\begin{corollary}\label{isothermic}
We assume that conformal maps $\lambda_\infty\colon M\to\mathbb{H}$, 
$\lambda_L\colon M\to\mathbb{H}$, $\mathfrak{f}\colon M\to\mathbb{H}$ and $\mathfrak{g}\colon M\to\mathbb{H}$ satisfy the equation 
$d\lambda_\infty+d\mathfrak{f}\,\lambda_L=0$ and $d\lambda_L+d\mathfrak{g}\,\lambda_\infty=0$. 
Set $\widehat{\mathfrak{f}}:=\lambda_\infty\lambda_L^{-1}+\mathfrak{f}$ and 
$\widehat{\mathfrak{g}}:=\lambda_L\lambda_\infty^{-1}+\mathfrak{g}$. 
If $d\lambda_\infty\wedge d\lambda_L^{-1}=d\lambda_L\wedge d\lambda_\infty^{-1}=0$, then  
$d\mathfrak{f}\wedge d\mathfrak{g}=d\mathfrak{g}\wedge d\mathfrak{f}=0$ and 
$d\widehat{\mathfrak{f}}\wedge d\widehat{\mathfrak{g}}=d\widehat{\mathfrak{g}}\wedge d\widehat{\mathfrak{f}}=0$. 
The conformal maps $\widehat{\mathfrak{f}}$ and $\widehat{\mathfrak{g}}$ are 
classical Darboux transforms of $\mathfrak{f}$ and $\mathfrak{g}$ respectively. 
\end{corollary}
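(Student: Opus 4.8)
The plan is to exploit the symmetry of the hypotheses under the exchange $(\lambda_\infty,\mathfrak{f})\leftrightarrow(\lambda_L,\mathfrak{g})$, under which the two assumed equations swap, $\widehat{\mathfrak{f}}$ and $\widehat{\mathfrak{g}}$ swap, and the two wedge hypotheses swap; hence every assertion proved for $\widehat{\mathfrak{f}}$ transfers verbatim to $\widehat{\mathfrak{g}}$. First I would record the first-order data. From $d\lambda_\infty+d\mathfrak{f}\,\lambda_L=0$ and $d\lambda_L+d\mathfrak{g}\,\lambda_\infty=0$ one gets $d\mathfrak{f}=-d\lambda_\infty\,\lambda_L^{-1}$ and $d\mathfrak{g}=-d\lambda_L\,\lambda_\infty^{-1}$, while Lemma \ref{Dtrans} (applied to each equation) gives that $\widehat{\mathfrak{f}}$ and $\widehat{\mathfrak{g}}$ are Darboux transforms of $\mathfrak{f}$ and $\mathfrak{g}$, together with the derivative formulas $d\widehat{\mathfrak{f}}=\lambda_\infty\,d\lambda_L^{-1}$ and $d\widehat{\mathfrak{g}}=\lambda_L\,d\lambda_\infty^{-1}$.

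For the four vanishing wedge products I would use two elementary facts about $\mathbb{H}$-valued one-forms: a quaternionic function may be moved across $\wedge$ by $(\alpha c)\wedge\beta=\alpha\wedge(c\beta)$ (and scalar factors on the far left or far right pull out of $\wedge$), and the inversion identity $\lambda_L^{-1}\,d\lambda_L=-d\lambda_L^{-1}\,\lambda_L$. Then
\[
d\mathfrak{f}\wedge d\mathfrak{g}=\bigl[d\lambda_\infty\wedge(\lambda_L^{-1}d\lambda_L)\bigr]\lambda_\infty^{-1}=-\bigl[d\lambda_\infty\wedge d\lambda_L^{-1}\bigr]\lambda_L\lambda_\infty^{-1}=0
\]
by the hypothesis $d\lambda_\infty\wedge d\lambda_L^{-1}=0$, and the same manipulation gives $d\mathfrak{g}\wedge d\mathfrak{f}=-[d\lambda_L\wedge d\lambda_\infty^{-1}]\lambda_\infty\lambda_L^{-1}=0$ as well as $d\widehat{\mathfrak{f}}\wedge d\widehat{\mathfrak{g}}=-\lambda_\infty\lambda_L^{-1}[d\lambda_L\wedge d\lambda_\infty^{-1}]=0$ and $d\widehat{\mathfrak{g}}\wedge d\widehat{\mathfrak{f}}=-\lambda_L\lambda_\infty^{-1}[d\lambda_\infty\wedge d\lambda_L^{-1}]=0$; each reduces to one of the two hypotheses after pulling the scalar factors outside.

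For the classical property I would appeal to the criterion in Lemma \ref{DTNR}: writing $T_\infty=\widehat{\mathfrak{f}}-\mathfrak{f}=\lambda_\infty\lambda_L^{-1}$, the transform $\widehat{\mathfrak{f}}$ is a classical Darboux transform and $\mathfrak{f}$ is isothermic precisely when the right normal $\widehat{R}$ of $\widehat{\mathfrak{f}}$ equals $-T_\infty^{-1}N T_\infty$. Substituting $d\lambda_L^{-1}=\lambda_L^{-1}\,d\mathfrak{g}\,\lambda_\infty\lambda_L^{-1}$ into $d\widehat{\mathfrak{f}}=\lambda_\infty\,d\lambda_L^{-1}$ yields the clean form $d\widehat{\mathfrak{f}}=T_\infty\,d\mathfrak{g}\,T_\infty$; applying $\ast$ and using $\ast d\mathfrak{g}=-d\mathfrak{g}\,R_\mathfrak{g}$, with $R_\mathfrak{g}$ the right normal of $\mathfrak{g}$, reads off $\widehat{R}=T_\infty^{-1}R_\mathfrak{g}T_\infty$. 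Hence everything collapses to the single identity $R_\mathfrak{g}=-N$, and Lemma \ref{DTNR} then finishes the claim for $\widehat{\mathfrak{f}}$, the case of $\widehat{\mathfrak{g}}$ following by the symmetry above.

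This identity $R_\mathfrak{g}=-N$ is the heart of the matter, and I expect it to be the main obstacle, since it is exactly where the extra hypothesis $d\lambda_L\wedge d\lambda_\infty^{-1}=0$ (beyond the bare Darboux data) must be used and where the non-commutativity has to be controlled. I would extract it thus: from $d\lambda_\infty=-d\mathfrak{f}\,\lambda_L$ and $\ast d\mathfrak{f}=N\,d\mathfrak{f}$ the left normal of $\lambda_\infty$ is $N$, so the left normal of $\lambda_\infty^{-1}$ is $\lambda_\infty^{-1}N\lambda_\infty$, while from $d\lambda_L=-d\mathfrak{g}\,\lambda_\infty$ the left normal of $\lambda_L$ is the left normal $N_\mathfrak{g}$ of $\mathfrak{g}$. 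In a conformal coordinate $z=x+yi$ the condition $d\lambda_L\wedge d\lambda_\infty^{-1}=0$ then forces $N_\mathfrak{g}=(\lambda_L)_x\,\lambda_\infty^{-1}N\lambda_\infty\,(\lambda_L)_x^{-1}$; inserting this into the conformality relation $N_\mathfrak{g}\,\mathfrak{g}_x=-\mathfrak{g}_x R_\mathfrak{g}$ with $\mathfrak{g}_x=-(\lambda_L)_x\lambda_\infty^{-1}$ makes all conjugations cancel and leaves $R_\mathfrak{g}=-N$. Throughout I would work on the open set where $\lambda_\infty$, $\lambda_L$ and the relevant coordinate derivatives are invertible and extend the pointwise normal identity by continuity, so that $\widehat{R}=-T_\infty^{-1}N T_\infty$ holds everywhere and Lemma \ref{DTNR} applies.
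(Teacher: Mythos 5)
Your proof is correct and follows essentially the same route as the paper: the four wedge identities are obtained by the same quaternionic manipulations (pulling the $0$-forms across $\wedge$ and using $\lambda^{-1}d\lambda=-d\lambda^{-1}\lambda$), and the classical property is reduced, via Lemma \ref{DTNR}, to the identity $\widehat{R}=-T_\infty^{-1}NT_\infty$ extracted from the hypothesis $d\lambda_L\wedge d\lambda_\infty^{-1}=0$ together with the left normal $N$ of $\mathfrak{f}$. Your intermediate identity $R_{\mathfrak{g}}=-N$ is only a mild repackaging of the paper's direct computation, which reads off the left normal of $d\widehat{\mathfrak{g}}=\lambda_L\,d\lambda_\infty^{-1}$ and pairs it with $d\widehat{\mathfrak{f}}\wedge d\widehat{\mathfrak{g}}=0$.
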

\begin{proof}
We have 
\begin{gather*}
d\mathfrak{f}\wedge d\mathfrak{g}=d\lambda_\infty\,\lambda_L^{-1}\wedge d\lambda_L\,\lambda_\infty^{-1}=-d\lambda_\infty\wedge d\lambda_L^{-1}\,\lambda_L\lambda_\infty^{-1}=0,\\
d\mathfrak{g}\wedge d\mathfrak{f}=d\lambda_L\,\lambda_\infty^{-1}\wedge d\lambda_\infty\,\lambda_L^{-1}=-d\lambda_L\wedge d\lambda_\infty^{-1}\,\lambda_\infty\lambda_L^{-1}=0.
\end{gather*}
Because $d\widehat{\mathfrak{f}}=\lambda_\infty\,d\lambda_L^{-1}$ and 
$d\widehat{\mathfrak{g}}=\lambda_L\,d\lambda_\infty^{-1}$, we have 
\begin{gather*}
d\widehat{\mathfrak{f}}\wedge d\widehat{\mathfrak{g}}=\lambda_\infty\,d\lambda_L^{-1}\wedge \lambda_L\,d\lambda_\infty^{-1}
=-\lambda_\infty\lambda_L^{-1}\,d\lambda_L\wedge d\lambda_\infty^{-1}=0,\\
d\widehat{\mathfrak{g}}\wedge d\widehat{\mathfrak{f}}=\lambda_L\,d\lambda_\infty^{-1}\wedge \lambda_\infty\,d\lambda_L^{-1}=-\lambda_L\lambda_\infty^{-1}\,d\lambda_\infty\wedge d\lambda_L^{-1}=0.
\end{gather*}
We assume that $(d\mathfrak{f})_{-N}=0$ and $(d\widehat{\mathfrak{f}})^{\widehat{R}}=0$. 
Because $d\mathfrak{f}=d\lambda_\infty\,\lambda_L^{-1}$, $d\widehat{\mathfrak{f}}=\lambda_\infty\,d\lambda_L^{-1}$, and 
$d\widehat{\mathfrak{f}}\wedge\lambda_L\,d\lambda_\infty^{-1}=-d\widehat{\mathfrak{f}}\wedge\lambda_L\lambda_\infty^{-1}\,d\lambda_\infty\,\lambda_\infty^{-1}=0$, we have 
$\widehat{R}=-\lambda_L\lambda_\infty^{-1}N\lambda_\infty\lambda_L^{-1}=-T_\infty^{-1} N T_\infty$. 
Hence $\widehat{\mathfrak{f}}$ is a classical Darboux transform of $\mathfrak{f}$. 
\end{proof}
We see that a pair of an isothermic surface in $\mathbb{E}^3$ and its classical Darboux transform forms 
a Bonnet pair in \cite{KPP98} up to similarities in $\mathbb{E}^3$. 

For an isothermic surface in $\mathbb{E}^3$, we have the following. 
\begin{corollary}\label{HIMC}
We assume that $\widehat{L}$ is a classical Darboux transform of a line bundle $L$ of 
an isothermic surface. 
The image of $\mathfrak{f}$ is contained in $\Img\mathbb{H}$ up to translations in $\mathbb{H}$ if and only if  
the image of $\widehat{\mathfrak{f}}$ is contained in $\Img\mathbb{H}$ up to translations in $\mathbb{H}$. 
\end{corollary}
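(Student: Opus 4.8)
The plan is to translate the geometric condition ``image contained in $\Img\mathbb{H}$ up to translations in $\mathbb{H}$'' into the algebraic condition on the normals recorded in Section 2, and then to read off the equivalence directly from the formulas of Lemma \ref{DTNR}. Recall that a conformal map $\mathfrak{f}$ has image contained in $\Img\mathbb{H}$ up to translations if and only if its left and right normals coincide, $N=R$, and that in this case $H$ is the (real-valued) mean curvature function. The same applies to $\widehat{\mathfrak{f}}$: its image lies in $\Img\mathbb{H}$ up to translations if and only if $\widehat{N}=\widehat{R}$. Hence it suffices to prove the purely algebraic equivalence: $N=R$ holds if and only if $\widehat{N}=\widehat{R}$.

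To this end I would assemble the relations supplied by Lemma \ref{DTNR}. Writing $T_\infty=\widehat{\mathfrak{f}}-\mathfrak{f}$, that lemma gives $\widehat{N}-N=T_\infty H$, $\widehat{R}-R=\widehat{H}\,T_\infty$, and $\widehat{H}=H$ (the latter two because $\mathfrak{f}$ is isothermic and $\widehat{\mathfrak{f}}$ is a classical Darboux transform). Subtracting the first two identities and using $\widehat{H}=H$ yields
\begin{gather*}
\widehat{N}-\widehat{R}=(N-R)+(T_\infty H-H\,T_\infty).
\end{gather*}
Thus the difference $\widehat{N}-\widehat{R}$ is controlled by $N-R$ together with the commutator of $T_\infty$ and $H$.

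The key observation is that whenever either surface lies in $\Img\mathbb{H}$, the corresponding mean curvature function is real-valued and hence central in $\mathbb{H}$. For the forward direction, assume $N=R$; then $H$ is real, so $T_\infty H-H\,T_\infty=0$, and the displayed identity forces $\widehat{N}=\widehat{R}$. For the converse, assume $\widehat{N}=\widehat{R}$; since the image of $\widehat{\mathfrak{f}}$ then lies in $\Img\mathbb{H}$, the function $\widehat{H}=H$ is real, so again the commutator vanishes and the identity gives $N=R$. Alternatively, the converse follows without recomputation from the symmetry noted in the proof of Lemma \ref{DTNR}, namely that $\mathfrak{f}$ is itself a classical Darboux transform of $\widehat{\mathfrak{f}}$, so the roles of $\mathfrak{f}$ and $\widehat{\mathfrak{f}}$ may simply be exchanged.

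I expect no serious obstacle: once Lemma \ref{DTNR} is in hand the argument is essentially a one-line substitution, and the only point requiring care is invoking the reality (hence centrality) of $H$, which is exactly what lets the commutator $T_\infty H-H\,T_\infty$ drop out. The mild subtlety is that reality of $H$ is available only under the hypothesis being assumed in each direction, so the two implications must cite it for the appropriate surface; the identity $\widehat{H}=H$ is what keeps these two instances of reality compatible.
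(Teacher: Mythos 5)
Your proposal is correct and follows essentially the same route as the paper: it reduces the statement to the equivalence $N=R\iff\widehat{N}=\widehat{R}$ and then applies the identities $\widehat{N}-N=T_\infty H$, $\widehat{R}-R=\widehat{H}T_\infty$, $\widehat{H}=H$ from Lemma \ref{DTNR}, with the reality of $H$ killing the commutator. The only difference is that you spell out the converse (which the paper dismisses as trivial), and your symmetry remark is a valid alternative for it.
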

\begin{proof}
Assume that the image of $\mathfrak{f}$ is contained in $\Img\mathbb{H}$ up to translations in $\mathbb{H}$. 
Then $N=R$ and $H$ is real-valued. 
By Lemma \ref{DTNR}, we have $\widehat{H}=H$. Hence $\widehat{H}$ is real-valued. 
Then 
\begin{gather*}
\widehat{N}=N+(\widehat{\mathfrak{f}}-\mathfrak{f})H,\\
\widehat{R}=R+\widehat{H}(\widehat{\mathfrak{f}}-\mathfrak{f})
=N+(\widehat{\mathfrak{f}}-\mathfrak{f})H=\widehat{N}. 
\end{gather*}
Then the image of $\widehat{\mathfrak{f}}$ is contained in $\Img\mathbb{H}$ up to translations in $\mathbb{H}$. 

The converse is trivial. 
\end{proof}

Let $B\colon \mathbb{H}\to\mathbb{H}$ be a Euclidean motion of $\mathbb{E}^4\cong\mathbb{H}$ fixing the origin. 
Then there exists $r$, $s\in\mathbb{H}$ with $|r|=|s|=1$ such that 
$Ba=ras^{-1}$ for any $a\in\mathbb{H}$. 
If $r=s$, then $B$ is a Euclidean motion of $\mathbb{E}^4\cong\mathbb{H}$ fixing $\mathbb{R}\subset\mathbb{H}$ and $\Img\mathbb{H}$. 
\begin{lemma}\label{EM}
Let $\mathfrak{f}\colon M\to\mathbb{H}$ be a conformal map 
and $\widehat{\mathfrak{f}}=\lambda_\infty\lambda_L^{-1}+\mathfrak{f}$ a Darboux transform of $\mathfrak{f}$ with respect to $v_\infty\lambda_\infty$ such that $d\lambda_\infty+d\mathfrak{f}\,\lambda_L=0$. 
Then $B\widehat{\mathfrak{f}}$ is a Darboux transform of $B\mathfrak{f}$ with respect to $v_\infty B\lambda_\infty$. 
\end{lemma}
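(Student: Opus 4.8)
The plan is to reduce the statement to the characterization of Darboux transforms given in Lemma \ref{Dtrans}, rather than to re-derive anything about the bundles $\underline{V}$, $L$, $\widehat{L}$ directly. Recall that by Lemma \ref{Dtrans} the data of a Darboux transform of a conformal map is encoded entirely in a pair of maps satisfying a single first-order equation: if $d\lambda_\infty+d\mathfrak{f}\,\lambda_L=0$, then $\lambda_\infty\lambda_L^{-1}+\mathfrak{f}$ is the Darboux transform of $\mathfrak{f}$ with respect to $v_\infty\lambda_\infty$. So it suffices to produce, for the transformed surface $B\mathfrak{f}$, a pair $(\tilde\lambda_\infty,\tilde\lambda_L)$ with $\tilde\lambda_\infty=B\lambda_\infty$ which solves this equation and whose associated transform formula reproduces $B\widehat{\mathfrak{f}}$.

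First I would use the preceding normal form: since $B$ fixes the origin there are unit quaternions $r,s$ with $Ba=ras^{-1}$, so that $B\mathfrak{f}=r\mathfrak{f}\,s^{-1}$, $B\widehat{\mathfrak{f}}=r\widehat{\mathfrak{f}}\,s^{-1}$ and $B\lambda_\infty=r\lambda_\infty s^{-1}$, with $r,s$ constant. The natural guess is
\begin{gather*}
\tilde\lambda_\infty:=B\lambda_\infty=r\lambda_\infty s^{-1},\qquad
\tilde\lambda_L:=s\lambda_L s^{-1}.
\end{gather*}
Then I would verify the defining equation by a direct substitution, pulling the constants $r$ out on the left and $s^{-1}$ out on the right:
\begin{gather*}
d\tilde\lambda_\infty+d(B\mathfrak{f})\,\tilde\lambda_L
=r\,d\lambda_\infty\,s^{-1}+r\,d\mathfrak{f}\,s^{-1}\,s\lambda_L s^{-1}
=r\bigl(d\lambda_\infty+d\mathfrak{f}\,\lambda_L\bigr)s^{-1}=0,
\end{gather*}
which vanishes by the hypothesis $d\lambda_\infty+d\mathfrak{f}\,\lambda_L=0$. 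Finally I would check that the transform formula of Lemma \ref{Dtrans} applied to $(\tilde\lambda_\infty,\tilde\lambda_L)$ over $B\mathfrak{f}$ gives exactly $B\widehat{\mathfrak{f}}$, since $\tilde\lambda_\infty\tilde\lambda_L^{-1}=r\lambda_\infty s^{-1}\,s\lambda_L^{-1}s^{-1}=r\lambda_\infty\lambda_L^{-1}s^{-1}$, hence $\tilde\lambda_\infty\tilde\lambda_L^{-1}+B\mathfrak{f}=r(\lambda_\infty\lambda_L^{-1}+\mathfrak{f})s^{-1}=B\widehat{\mathfrak{f}}$. Lemma \ref{Dtrans} then yields that $B\widehat{\mathfrak{f}}$ is a Darboux transform of $B\mathfrak{f}$ with respect to $v_\infty\tilde\lambda_\infty=v_\infty B\lambda_\infty$, as claimed.

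The one genuinely nontrivial point, and the thing I expect to be the main obstacle, is choosing the correct transformation law for the two maps: $\lambda_\infty$ sits in the ``position'' slot and transforms as $r(\cdot)s^{-1}$, whereas $\lambda_L$ plays the role of a directional coefficient and must instead be conjugated by $s$, i.e. $\tilde\lambda_L=s\lambda_L s^{-1}$. This asymmetry is precisely what makes both the first-order equation survive (the $s^{-1}$ from $d(B\mathfrak{f})$ cancels the $s$ in $\tilde\lambda_L$) and the quotient $\lambda_\infty\lambda_L^{-1}$ transform covariantly under $B$. Getting this bookkeeping of left versus right multiplication right is the crux; once the ansatz is fixed, the remaining verifications are the short computations above.
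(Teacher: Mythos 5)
Your proposal is correct and is essentially the paper's own argument: the paper likewise sets the transformed pair to be $(B\lambda_\infty,\, s\lambda_L s^{-1})$, checks $d(B\lambda_\infty)+d(B\mathfrak{f})\,s\lambda_L s^{-1}=0$ from the hypothesis, and reads off $B\widehat{\mathfrak{f}}=B\lambda_\infty\,s\lambda_L^{-1}s^{-1}+B\mathfrak{f}$, invoking Lemma \ref{Dtrans}. The conjugation-by-$s$ bookkeeping you flag as the crux is exactly the step the paper performs.
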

\begin{proof}
We have 
\begin{gather*}
B\widehat{\mathfrak{f}}=B\lambda_\infty\,s\lambda_L^{-1}s^{-1}+B\mathfrak{f}.
\end{gather*}
Because $d\lambda_\infty+d\mathfrak{f}\,\lambda_L=0$, we have 
\begin{gather*}
d(B\lambda_\infty) +d(B\mathfrak{f})\,s\lambda_Ls^{-1}=0. 
\end{gather*}
Hence $B\widehat{\mathfrak{f}}$ is a Darboux transform of $B\mathfrak{f}$ with respect to $v_\infty B\lambda_\infty$. 
\end{proof}

For  a subgroup $\mathcal{B}$ of Euclidean motions of $\mathbb{E}^n$ $(n=3,4)$, 
we say that a map $\mathfrak{f}\colon M\to\mathbb{E}^n$ is \textit{invariant under $\mathcal{B}$} if 
there exists a subgroup $\{\tau_B\}_{B\in\mathcal{B}}$ of holomorphic transformations of $M$ such that 
$B\mathfrak{f}=\mathfrak{f}\circ\tau_B$ for any $B\in\mathcal{B}$. 

\begin{corollary}\label{inv}
Let $\mathcal{B}$ be a subgroup of Euclidean motions of $\mathbb{E}^4\cong\mathbb{H}$ such that 
each element of $\mathcal{B}$ fixes the origin of $\mathbb{H}$ 
and $\mathfrak{f}\colon M\to\mathbb{H}$ be a conformal map invariant under $\mathcal{B}$. 
Let $\tau_B\colon M\to M$ be a holomorphic transformations of $M$ such that $B\mathfrak{f}=\mathfrak{f}\circ \tau_B$ for each $B\in\mathcal{B}$. 
Assume that  $\lambda_\infty\colon M\to\mathbb{H}$ and $\lambda_L\colon M\to\mathbb{H}$ 
are maps such that $d\lambda_\infty+d\mathfrak{f}\,\lambda_L=0$ and $B\lambda_\infty=\lambda_\infty\circ \tau_B$ for every $B\in \mathcal{B}$.  
Set $\widehat{\mathfrak{f}}:=\lambda_\infty\lambda_L^{-1}+\mathfrak{f}$. 
Then $\widehat{\mathfrak{f}}$ is a Darboux transform of $\mathfrak{f}$ invariant under $\mathcal{B}$. 
\end{corollary}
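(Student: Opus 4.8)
The plan is to separate the two assertions: that $\widehat{\mathfrak{f}}$ is a Darboux transform of $\mathfrak{f}$, and that it is invariant under $\mathcal{B}$. The first is immediate from Lemma~\ref{Dtrans}: the hypothesis $d\lambda_\infty+d\mathfrak{f}\,\lambda_L=0$ is exactly what that lemma requires, so $\widehat{\mathfrak{f}}=\lambda_\infty\lambda_L^{-1}+\mathfrak{f}$ is a Darboux transform of $\mathfrak{f}$ with respect to $v_\infty\lambda_\infty$. The whole content of the corollary is therefore to show that the given family $\{\tau_B\}_{B\in\mathcal{B}}$, which already witnesses the invariance of $\mathfrak{f}$ and is a subgroup of holomorphic transformations, also satisfies $B\widehat{\mathfrak{f}}=\widehat{\mathfrak{f}}\circ\tau_B$ for every $B\in\mathcal{B}$.

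For the invariance I would compute both sides and compare. Writing $Ba=ras^{-1}$ with $|r|=|s|=1$ (possible since $B$ fixes the origin, hence is $\mathbb{R}$-linear), the computation in the proof of Lemma~\ref{EM} gives $B\widehat{\mathfrak{f}}=(B\lambda_\infty)(s\lambda_L^{-1}s^{-1})+B\mathfrak{f}$, together with the relation $d(B\lambda_\infty)+d(B\mathfrak{f})\,s\lambda_Ls^{-1}=0$. On the other hand, the hypotheses $\mathfrak{f}\circ\tau_B=B\mathfrak{f}$ and $\lambda_\infty\circ\tau_B=B\lambda_\infty$ give directly $\widehat{\mathfrak{f}}\circ\tau_B=(B\lambda_\infty)(\lambda_L\circ\tau_B)^{-1}+B\mathfrak{f}$. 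Comparing the two expressions, the corollary reduces to the single identity $\lambda_L\circ\tau_B=s\lambda_Ls^{-1}$.

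I expect this identity to be the crux, and the natural route is uniqueness of $\lambda_L$ in its defining first-order equation. Pulling $d\lambda_\infty+d\mathfrak{f}\,\lambda_L=0$ back along $\tau_B$ and substituting $\lambda_\infty\circ\tau_B=B\lambda_\infty$, $\mathfrak{f}\circ\tau_B=B\mathfrak{f}$ shows that $\lambda_L\circ\tau_B$ solves $d(B\lambda_\infty)+d(B\mathfrak{f})\,\mu=0$, while the previous paragraph shows $s\lambda_Ls^{-1}$ solves the very same equation. Evaluating this equation on a nonzero tangent vector $X$ yields $d(B\mathfrak{f})(X)\,\mu=-d(B\lambda_\infty)(X)$; since $\mathfrak{f}$, and hence $B\mathfrak{f}$, is a conformal immersion away from branch points, $d(B\mathfrak{f})(X)\neq0$, and because $\mathbb{H}$ has no zero divisors $\mu$ is uniquely determined. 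Thus $\lambda_L\circ\tau_B=s\lambda_Ls^{-1}$, and substituting back gives $B\widehat{\mathfrak{f}}=\widehat{\mathfrak{f}}\circ\tau_B$. The main subtlety to watch is that $\lambda_L$ is not itself $\mathcal{B}$-invariant but transforms by the conjugation $\lambda_L\mapsto s\lambda_Ls^{-1}$; keeping this conjugation in place throughout is what makes the two computations match, whereas the naive guess $\lambda_L\circ\tau_B=\lambda_L$ would fail.
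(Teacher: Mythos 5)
Your proposal is correct and follows essentially the same route as the paper: both reduce the invariance of $\widehat{\mathfrak{f}}$ to the identity $\lambda_L\circ\tau_B=s\lambda_Ls^{-1}$, obtained by comparing $B(d\lambda_\infty+d\mathfrak{f}\,\lambda_L)=0$ with $\tau_B^\ast(d\lambda_\infty+d\mathfrak{f}\,\lambda_L)=0$. Your explicit justification of the uniqueness step (no zero divisors in $\mathbb{H}$, $d(B\mathfrak{f})\neq 0$ away from branch points) is a detail the paper leaves implicit, but the argument is the same.
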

\begin{proof}
Let $B\in\mathcal{B}$. Then there exist $r$, $s\in\mathbb{H}$ with $|r|=|s|=1$ such that $Ba=ras^{-1}$ for any $a\in \mathbb{H}$. 
We have 
\begin{gather*}
B(d\lambda_\infty+d\mathfrak{f}\,\lambda_L)=d(B\lambda_\infty)+d(B\mathfrak{f})\,s\lambda_Ls^{-1}=0,\\
\tau_B^\ast(d\lambda_\infty+d\mathfrak{f}\,\lambda_L)
=d(\lambda_\infty\circ\tau_B)+(d(\mathfrak{f}\circ\tau_B))\,\lambda_L\circ\tau_B\\
=d(B\lambda_\infty)+(d(B\mathfrak{f}))\,\lambda_L\circ\tau_B=0. 
\end{gather*}
Hence $\lambda_L\circ\tau_B=s\lambda_Ls^{-1}$. 
By the definition, the map $\widehat{\mathfrak{f}}$ is a Darboux transform of $\mathfrak{f}$ 
with respect to $v_\infty\lambda_\infty$. 
We have 
\begin{gather*}
B\widehat{\mathfrak{f}}=B(\lambda_\infty\lambda_L^{-1}+\mathfrak{f})
=(B\lambda_\infty)s\lambda_L^{-1}s^{-1}+B\mathfrak{f}\\
=(\lambda_\infty\circ\tau_B)(\lambda_L\circ\tau_B)^{-1}+\mathfrak{f}\circ\tau_B
=\widehat{\mathfrak{f}}\circ \tau_B. 
\end{gather*}
for each $B\in\mathcal{B}$. 
Hence $\widehat{\mathfrak{f}}$ is invariant under $B$. 
\end{proof}
\section{GHIMC surfaces}
A conformal map $\mathfrak{f}\colon M\to\Img\mathbb{H}$ is called a \textit{harmonic inverse mean curvature surface} if 
$d\ast dH^{-1}=0$ (\cite{Bobenko94}). 
We introduce the notion of a generalized harmonic inverse mean curvature surface and define its transform. 

Let $L$ be a line bundle of a conformal map with mean curvature sphere $S$ such that
$L_p\neq \{v_\infty\lambda\,|\,\lambda\in\mathbb{H}\}$ for every $p\in M$.  
Set $\mathfrak{f}:=\sigma(L)\colon M\to\mathbb{H}$. 
Define $H\colon M\to\mathbb{H}$ by \eqref{mcs}. 
\begin{definition}\label{defGHIMC}
We call the map $\mathfrak{f}$ a \textit{generalized harmonic inverse mean curvature surface} if 
$d\ast dH^{-1}=0$. 
\end{definition}
We abbreviate a harmonic inverse mean curvature surface as a HIMC surface and 
a generalized harmonic inverse mean curvature surface as a GHIMC surface. 
A GHIMC surface $\mathfrak{f}$ whose image is contained in $\Img\mathbb{H}$ up to translations in $\mathbb{H}$ 
is exactly a HIMC surface. 

\begin{proof}[Proof of Theorem \ref{DT}]
Let $L$ be a line bundle of a conformal map with mean curvature sphere $S$ such that
$L_p\neq \{v_\infty\lambda\,|\,\lambda\in\mathbb{H}\}$ for every $p\in M$ and 
$\widehat{L}$ be a Darboux transform of $L$. 
Set $\mathfrak{f}:=\sigma(L)$ and $\widehat{\mathfrak{f}}:=\sigma(\widehat{L})$. 
Define $H\colon M\to\mathbb{H}$ by \eqref{mcs} and $\widehat{H}\colon M\to\mathbb{H}$ by \eqref{DTmcs}. 
By Lemma \ref{DTNR}, we have $\widehat{H}=H$. Hence $d\ast d\widehat{H}=0$ if and only if 
$d\ast dH=0$. 
Hence $\mathfrak{f}$ is a GHIMC surface if and only if $\widehat{\mathfrak{f}}$ is.

If $\mathfrak{f}$ is a HIMC surface, 
then the image of $\widehat{\mathfrak{f}}$ is contained in $\Img\mathbb{H}$ up to translations in $\mathbb{H}$ by Corollary \ref{HIMC}. 
Hence $\widehat{\mathfrak{f}}$ is a HIMC surface. 
Similarly, if $\widehat{\mathfrak{f}}$ is a HIMC surface, then $\mathfrak{f}$ is a HIMC surface.
\end{proof}

Comparing a Willmore conformal map with a GHIMC surface,  
we advance the theory of GHIMC surfaces. 

Define one-forms $A$, $Q$ and $w$ by \eqref{Hopf} and \eqref{Hopfw}. 
The map $\mathfrak{f}$ is Willmore if and only if $w$ is closed (see \cite{BFLPP02}). 
Theorem \ref{cond} is a counterpart in GHIMC surfaces. 
\begin{proof}[Proof of Theorem \ref{cond}]
We have 
\begin{gather*}
-H^{-1}\ast w\, H^{-1}=H^{-1}(-\ast dH+H\,d\mathfrak{f}\,H+R\,dH-H\,dN)H^{-1}\\
=\ast\, dH^{-1}+d\mathfrak{f}+H^{-1}R\,dH\,H^{-1}-dN\,H^{-1}\\
=\ast\, dH^{-1}+d\mathfrak{f}+NH^{-1}\,dH\,H^{-1}-dN\,H^{-1}\\
=\ast\, dH^{-1}+d\mathfrak{f}-N\,dH^{-1}-dN\,H^{-1}\\
=\ast\, dH^{-1}+d(\mathfrak{f}-NH^{-1}).
\end{gather*}
Then $H^{-1}\ast\,w\,H^{-1}+\ast\, dH^{-1}$ is closed. 
Hence $L$ is a GHIMC surface if and only if there exists $n\in\mathbb{H}$ with $n\neq 1$ 
such that $H^{-1}\ast w\,H^{-1}+n\ast \,dH^{-1}$ is closed.  
\end{proof}
We explain Theorem \ref{cond} in a different way. 
Set $v_L:=v_0+v_\infty\mathfrak{f}$. Then, $(v_\infty,v_L)$ is a frame of $\underline{V}$. 
Set $\psi:=-\mathfrak{f}v_0^\ast+v_\infty^\ast$. Then $( \psi,v_0^\ast)$ is the dual frame of $(v_\infty,v_L)$. 
\begin{corollary}\label{HopfC}
Let $L$ be a line bundle of a conformal map with mean curvature sphere $S$ such that
$L_p\neq \{v_\infty\lambda\,|\,\lambda\in\mathbb{H}\}$ for every $p\in M$.  
Set $\mathfrak{f}:=\sigma(L)\colon M\to\mathbb{H}$, $v_L:=v_0+v_\infty\mathfrak{f}$ and 
$\psi:=-\mathfrak{f}v_0^\ast+v_\infty^\ast$. 
Define a map $H\colon M\to\mathbb{H}$ by \eqref{mcs}, a one-form $Q$ by \eqref{Hopf} and a one-form $w$ by \eqref{Hopfw}. 
The map $\mathfrak{f}\colon M\to\mathbb{H}$ is a GHIMC surface if and only if 
$H^{-1}\psi (Qv_\infty)H^{-1}$ is closed. 
\end{corollary}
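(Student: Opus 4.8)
The plan is to prove the equivalence by reducing it to Definition \ref{defGHIMC} (equivalently to Theorem \ref{cond}): I would rewrite $H^{-1}\psi(Qv_\infty)H^{-1}$ as a fixed nonzero real multiple of $\ast\,dH^{-1}$ plus an exact one-form. Once this is done the conclusion is immediate, since $d\bigl(H^{-1}\psi(Qv_\infty)H^{-1}\bigr)$ is then a nonzero multiple of $d\ast dH^{-1}$, and this vanishes precisely when $\mathfrak{f}$ is a GHIMC surface. Thus the whole content of the corollary is that conjugating the relevant piece of the Hopf field $Q$ by $H^{-1}$ recovers, up to an exact term and a scalar, the form $\ast\,dH^{-1}$ whose coclosedness defines the class.

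The first concrete step is to compute $\psi(Qv_\infty)$ explicitly. Since $(\psi,v_0^\ast)$ is the frame dual to $(v_\infty,v_L)$, the covector $\psi$ satisfies $\psi(v_\infty)=1$ and $\psi(v_L)=0$, so applying it to the frame expansion of $4Qv_\infty$ in \eqref{Hopfw} isolates a single coefficient one-form determined through the data of \eqref{mcs}. The next step is to conjugate this one-form by $H^{-1}$ on both sides and simplify, using the structural relations $RH=HN$ (hence $NH^{-1}=H^{-1}R$), the identity $dH^{-1}=-H^{-1}\,dH\,H^{-1}$, and the fact that the Hodge star commutes past the function-valued factors, $\ast(H^{-1}\omega H^{-1})=H^{-1}(\ast\omega)H^{-1}$. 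To link the coefficient produced by $\psi$ to the term containing $w$ I would exploit that $Qv_L=0$ together with the type condition on $Q$ from \eqref{Hopf}, which relates the two frame-coefficients of $Qv_\infty$ and lets me reexpress $\psi(Qv_\infty)$ through $\ast(2\,dH-w)$.

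The decisive input is the identity already extracted in the proof of Theorem \ref{cond}, namely that $H^{-1}\ast w\,H^{-1}+\ast\,dH^{-1}=-d(\mathfrak{f}-NH^{-1})$ is exact, supplemented by Lemma \ref{eta}, which gives $\ast(2\,dH-w)=N(2\,dH-w)$ and lets me absorb the remaining $dH$–contributions via $H^{-1}\ast dH\,H^{-1}=-\ast dH^{-1}$. Feeding these in, I expect the $w$– and $dH$–terms to collapse, modulo the exact form $d(\mathfrak{f}-NH^{-1})$, onto a single term proportional to $\ast\,dH^{-1}$, which completes the reduction. The main obstacle is the noncommutative bookkeeping: keeping the left/right placement of $H^{-1}$, $N$ and $R$ correct while repeatedly applying $RH=HN$ and $dH^{-1}=-H^{-1}dH\,H^{-1}$, moving $\ast$ past the factors $H^{-1}$, and — most delicately — verifying that after all cancellations the surviving term is genuinely a nonzero multiple of $\ast\,dH^{-1}$ with no leftover non-exact piece. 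That last point is exactly where the relations of \eqref{mcs} and Lemma \ref{eta} must be invoked in precisely the right combination, and it is the step I would check most carefully.
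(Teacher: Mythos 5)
Your proposal is correct and takes essentially the same route as the paper: the paper's proof consists precisely of the identity $4H^{-1}\psi(Qv_\infty)H^{-1}=-2\ast dH^{-1}-H^{-1}\ast w\,H^{-1}$ (obtained by reading off the $v_L$-coefficient $\ast(2\,dH-w)$ of $4Qv_\infty$ from \eqref{Hopfw} and using $dH^{-1}=-H^{-1}\,dH\,H^{-1}$), followed by an appeal to Theorem \ref{cond}, whose proof supplies exactly the exact form $H^{-1}\ast w\,H^{-1}+\ast\,dH^{-1}=-d(\mathfrak{f}-NH^{-1})$ that you invoke. The only superfluous step in your plan is the detour through $Qv_L=0$ and the type condition of $Q$ to relate the two frame coefficients of $Qv_\infty$: in the paper's usage $\psi(Qv_\infty)$ is already the coefficient $\ast(2\,dH-w)$ (up to the factor $4$), so after the conjugation by $H^{-1}$ nothing further is needed.
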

\begin{proof}
We have 
\begin{gather*}
4H^{-1}\psi (Qv_\infty)H^{-1}=-2\ast dH^{-1}-H^{-1}\ast w\, H^{-1}.
\end{gather*}
Hence the corollary follows by Theorem \ref{cond}. 
\end{proof}

Let $\mathfrak{b}\colon M\to \mathbb{H}$ be a map such that $d\mathfrak{b}=w/2-dH$. 
If $\mathfrak{f}$ is Willmore, then the subspace spanned by 
$v_L$ and $v_L\mathfrak{b}$ defines a B\"{a}cklund transform of $L$ and the map $\mathfrak{b}$ is called 
a backward B\"{a}cklund transform (see \cite{BFLPP02}). 
Theorem \ref{Btrans} is a counterpart of a backward B\"{a}cklund transform of a Willmore surface in GHIMC surfaces. 
\begin{proof}[Proof of Theorem \ref{Btrans}]
Define $Q$ by \eqref{Hopf}. 
Then 
\begin{gather*}
2\,d\mathfrak{h}=-d\mu+d(\mathfrak{f}-N\,H^{-1})\\
=-2\ast dH^{-1}-H^{-1}\ast w\,H^{-1}
=4H^{-1}\psi(Qv_\infty)H^{-1}. 
\end{gather*}
We have 
$\psi (Qv_\infty)=\ast\,(2\,dH-w)$. 
By Lemma \ref{eta}, we have $(\psi (Qv_\infty))^{-N}=0$. 
Hence 
\begin{gather*}
\ast\,(H^{-1}\psi (Qv_\infty)H^{-1})=H^{-1}\ast(\psi (Qv_\infty))H^{-1}\\
=H^{-1}\psi (Qv_\infty)NH^{-1}=H^{-1}\psi (Qv_\infty)H^{-1}R. 
\end{gather*}
Hence $(d\mathfrak{h})^{-R}=0$. 
Then $(d\overline{\mathfrak{h}})_R=0$. 
By Lemma \ref{cBtrans}, the subspace spanned by $v_L$ and $v_L\overline{\mathfrak{h}}$ 
defines a B\"{a}cklund transform of $L$. 
\end{proof}

As a counterpart in a GHIMC surface, we introduce the notion of a backward B\"{a}cklund transform of a GHIMC surface as follows. 
\begin{definition}\label{defbBtrans}
Let $L$ be a line bundle of a conformal map with mean curvature sphere $S$ such that
$L_p\neq \{v_\infty\lambda\,|\,\lambda\in\mathbb{H}\}$ for every $p\in M$.  
We assume that $\mathfrak{f}:=\sigma(L)$ is a GHIMC surface. 
Define $H\colon M\to\mathbb{H}$ and $N\colon M\to S^2$ by \eqref{mcs}. 
Let $\mu\colon M\to\mathbb{H}$ be a map such that $d\mu=\ast\,dH^{-1}$ and 
$\mathfrak{h}\colon M\to\mathbb{H}$ be a map defined by 
\begin{gather*}
\mathfrak{h}=\frac{1}{2}(-\mu+\mathfrak{f}-NH^{-1}).
\end{gather*}
Then we call the map $\overline{\mathfrak{h}}$ the \textit{backward B\"{a}cklund transform} of $\mathfrak{f}$. 
\end{definition}
A backward B\"{a}cklund transform of a Willmore surface is Willmore.  
On the other hand, it is unclear whether 
a backward B\"{a}cklund transform of a GHIMC surface is a GHIMC surface.  

Let $B\colon \mathbb{H}\to\mathbb{H}$ be a Euclidean motion of $\mathbb{E}^4\cong\mathbb{H}$. 
Then there exist $r$, $s\in\mathbb{H}$ with $|r|=|s|=1$ and $t\in\mathbb{H}$ such that 
$Ba=ras^{-1}+t$ for any $a\in\mathbb{H}$. 
\begin{lemma}\label{EMBt}
Assume that $\mathfrak{f}\colon M\to\mathbb{H}$ is a GHIMC surface with mean curvature sphere $S$.  
Define $N\colon M\to S^2$ and $H\colon M\to\mathbb{H}$ by \eqref{mcs}. 
Let $\overline{\mathfrak{h}}=(-\overline{\mu}+\overline{\mathfrak{f}}-\overline{NH^{-1}})/2$ a backward B\"{a}cklund transform of $\mathfrak{f}$ with $d\mu=\ast\,dH^{-1}$. 
Then $\overline{B(2\mathfrak{h})/2}$ is a backward B\"{a}cklund transform of $B\mathfrak{f}$. 
\end{lemma}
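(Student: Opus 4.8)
The plan is to track how the mean curvature sphere data $N$, $R$, $H$ transform under $B$, to verify that $B\mathfrak{f}$ is again a GHIMC surface, and then to feed the transformed data into Definition \ref{defbBtrans}. First I would differentiate $B\mathfrak{f}=r\mathfrak{f}s^{-1}+t$ to get $d(B\mathfrak{f})=r\,d\mathfrak{f}\,s^{-1}$. Since \eqref{mcs} gives $\ast\,d\mathfrak{f}=N\,d\mathfrak{f}=-d\mathfrak{f}\,R$, applying $\ast$ yields $\ast\,d(B\mathfrak{f})=(rNr^{-1})\,d(B\mathfrak{f})=-d(B\mathfrak{f})(sRs^{-1})$, so the left and right normals of $B\mathfrak{f}$ are $\widetilde{N}=rNr^{-1}$ and $\widetilde{R}=sRs^{-1}$. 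Conjugating $d\mathfrak{f}\,H=(dN)_N$ by $r$ and $s$, and using $(d\widetilde{N})_{\widetilde{N}}=r(dN)_N r^{-1}$ (conjugation by a unit quaternion commutes with $\ast$ and with the projection), then forces $\widetilde{H}=sHr^{-1}$, hence $\widetilde{H}^{-1}=rH^{-1}s^{-1}$; the companion relation $H\,d\mathfrak{f}=(dR)^{-R}$ gives the same answer, and $RH=HN$ transforms consistently to $\widetilde{R}\widetilde{H}=\widetilde{H}\widetilde{N}$, which I would use as a check.

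Next I would confirm that $B\mathfrak{f}$ is a GHIMC surface. From $\widetilde{H}^{-1}=rH^{-1}s^{-1}$ we get $d\widetilde{H}^{-1}=r(dH^{-1})s^{-1}$ and $\ast\,d\widetilde{H}^{-1}=r(\ast\,dH^{-1})s^{-1}$, whence $d\ast d\widetilde{H}^{-1}=r(d\ast dH^{-1})s^{-1}=0$ by hypothesis. That $B\mathfrak{f}$ again admits a mean curvature sphere follows from the converse construction in Section 2, since $(d(B\mathfrak{f}))_{-\widetilde{N}}=(d(B\mathfrak{f}))^{\widetilde{R}}=0$.

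Finally I would apply Definition \ref{defbBtrans} to $B\mathfrak{f}$. A potential $\widetilde{\mu}$ with $d\widetilde{\mu}=\ast\,d\widetilde{H}^{-1}$ may be chosen as $\widetilde{\mu}=r\mu s^{-1}$, since $d(r\mu s^{-1})=r(\ast\,dH^{-1})s^{-1}=\ast\,d\widetilde{H}^{-1}$ and $\mu$ is determined only up to an additive constant. The backward B\"{a}cklund transform of $B\mathfrak{f}$ is then $\overline{\widetilde{\mathfrak{h}}}$ with
\begin{gather*}
2\widetilde{\mathfrak{h}}=-\widetilde{\mu}+B\mathfrak{f}-\widetilde{N}\widetilde{H}^{-1}=-r\mu s^{-1}+(r\mathfrak{f}s^{-1}+t)-rNH^{-1}s^{-1}=r(-\mu+\mathfrak{f}-NH^{-1})s^{-1}+t.
\end{gather*}
The right-hand side is exactly $r(2\mathfrak{h})s^{-1}+t=B(2\mathfrak{h})$, so $\widetilde{\mathfrak{h}}=B(2\mathfrak{h})/2$, and taking conjugates gives $\overline{\widetilde{\mathfrak{h}}}=\overline{B(2\mathfrak{h})/2}$, as claimed. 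I expect the only delicate point to be the one-sided conjugation bookkeeping in the first step, namely obtaining $\widetilde{H}=sHr^{-1}$ rather than $rHs^{-1}$; after that the GHIMC invariance and the final collapse are immediate, and the identity $\widetilde{R}\widetilde{H}=sRHr^{-1}=sHNr^{-1}=\widetilde{H}\widetilde{N}$ guards against sign or side errors.
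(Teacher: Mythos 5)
Your proposal is correct and follows essentially the same route as the paper: identify $\widetilde{N}=rNr^{-1}$, $\widetilde{R}=sRs^{-1}$, $\widetilde{H}=sHr^{-1}$, choose $\widetilde{\mu}=r\mu s^{-1}$, and collapse $-\widetilde{\mu}+B\mathfrak{f}-\widetilde{N}\widetilde{H}^{-1}$ to $B(2\mathfrak{h})$; the only cosmetic difference is that you extract $\widetilde{H}$ from the structure equation $d\mathfrak{f}\,H=(dN)_N$ while the paper uses the mean curvature vector $\mathcal{H}=-N\overline{H}$. Note that your value $\widetilde{H}=sHr^{-1}$ is the one the paper actually uses in its final computation (via $\widetilde{H}^{-1}=rH^{-1}s^{-1}$), its displayed ``$=-sHr^{-1}$'' being a sign slip.
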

\begin{proof}
The mean curvature sphere $\tilde{S}$ of $B\mathfrak{f}$ defines the functions 
$\tilde{N}\colon M\to S^2$, $\tilde{R}\colon M\to S^2$ and $\tilde{H}\colon M\to\mathbb{H}$ by 
\begin{gather}
\begin{gathered}
v_{\tilde{L}}=v_0+v_\infty B\mathfrak{f},\\
\tilde{S}v_\infty=v_\infty \tilde{N}+v_{\tilde{L}}(-\tilde{H}),\enskip \tilde{S}v_{\tilde{L}}=v_{\tilde{L}}(-\tilde{R}),\\
\tilde{N}^2=\tilde{R}^2=-1,\enskip 
\tilde{R}\tilde{H}=\tilde{H}\tilde{N},\\
d(B\mathfrak{f})\,\tilde{H}=(d\tilde{N})_{\tilde{N}},\enskip 
\tilde{H}\,d(B\mathfrak{f})=(d\tilde{R})^{-\tilde{R}}. 
\end{gathered}
\end{gather}
Then $\tilde{N}=rNr^{-1}$ and $\tilde{R}=sRs^{-1}$. 
Let $\tilde{\mathcal{H}}$ be the mean curvature vector of $B\mathfrak{f}$. 
Then $\tilde{\mathcal{H}}=r\mathcal{H}s^{-1}$. 
Hence 
\begin{gather*}
\tilde{H}=-\overline{\tilde{\mathcal{H}}}\tilde{N}=-\overline{r\mathcal{H}s^{-1}}rNr^{-1}=-s\overline{\mathcal{H}}Nr^{-1}=-sHr^{-1}.
\end{gather*}
Set $\tilde{\mu}:=r\mu s^{-1}$. 
Then $d\tilde{\mu}=\ast \,d\tilde{H}^{-1}=r\,d\mu\,s^{-1}$. 
Hence
\begin{gather*}
B(2\mathfrak{h})=r\left(-\mu+\mathfrak{f}-NH^{-1}\right)s^{-1}+t\\
=-r\mu s^{-1}+r\mathfrak{f}s^{-1}-rNr^{-1}rH^{-1}s^{-1}+t\\
=-\tilde{\mu}+B\mathfrak{f}-\tilde{N}\tilde{H}^{-1}. 
\end{gather*}
Then $\overline{B(2\mathfrak{h})/2}$ is a backward B\"{a}cklund transform of $B\mathfrak{f}$. 
\end{proof}

We can construct a Darboux transform of a GHIMC surface from the backward B\"{a}cklund transform. 
\begin{proof}[Proof of Theorem \ref{DTGHIMC}]
Because $v_L$ and $v_L\overline{\mathfrak{h}}$ defines a 
B\"{a}cklund transform of $L$ by Theorem \ref{Btrans}, there exists 
a map $\lambda_\infty\colon M\to\mathbb{H}$ such that $d\lambda_\infty+d\mathfrak{f}\,\overline{\mathfrak{h}}=0$ and 
the map $\widehat{\mathfrak{f}}=\lambda_\infty\overline{\mathfrak{h}}^{-1}+\mathfrak{f}$ is a Darboux transform of $\mathfrak{f}$ with respect to $v_\infty\lambda_\infty$ by Lemma \ref{Dtrans}. 
\end{proof}

We consider a HIMC surface of revolution. 
Let $\mathcal{R}$ be a group consists of all the elements of the rotation group of $\mathbb{H}$ which 
fix the origin and a plane containing the origin, $1$ and $k$. 
A conformal map $\mathfrak{f}\colon M\to\Img\mathbb{H}$ invariant under $\mathcal{R}$ is called 
a surface of revolution. 
It is known that a surface of revolution is an isothermic surface. 
\begin{corollary}\label{Dtransr}
Let $\mathfrak{f}\colon M\to\mathbb{H}$ be an isothermic GHIMC surface invariant under $\mathcal{R}$ 
and $\{\tau_B\}_{B\in\mathcal{R}}$ be the subgroup of holomorphic transformations of $M$ such that 
$B\mathfrak{f}=\mathfrak{f}\circ\tau_B$ for each $B\in\mathcal{R}$. 
Assume that $\lambda_\infty\colon M\to\mathbb{H}$ and $\lambda_L\colon M\to\mathbb{H}$ are maps 
such that $d\lambda_\infty+d\mathfrak{f}\,\lambda_L=0$, 
$\widehat{\mathfrak{f}}:=\lambda_\infty\lambda_L^{-1}+\mathfrak{f}$ is a classical 
Darboux transform and $B\lambda_\infty=\lambda_\infty\circ\tau_B$ for every $B\in\mathcal{R}$. 
Then $\widehat{\mathfrak{f}}$ is a GHIMC surface invariant under $\mathcal{R}$. 
If $\mathfrak{f}\colon M\to\Img\mathbb{H}$ is an isothermic HIMC surface invariant under $\mathcal{R}$, then 
$\widehat{\mathfrak{f}}$ is a HIMC surface invariant under $\mathcal{R}$. 
\end{corollary}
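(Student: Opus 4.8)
The plan is to derive the statement by combining two results already established in the excerpt: Theorem \ref{DT}, which controls the GHIMC (respectively HIMC) property under a classical Darboux transform, and Corollary \ref{inv}, which controls the $\mathcal{R}$-invariance of a Darboux transform. No new computation is needed; the work lies in checking that the single map $\widehat{\mathfrak{f}}=\lambda_\infty\lambda_L^{-1}+\mathfrak{f}$ appearing in the hypothesis satisfies the hypotheses of both results simultaneously.

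First I would verify that we are in the setting of Corollary \ref{inv}. By its definition, $\mathcal{R}$ consists of rotations of $\mathbb{H}$ fixing the origin and the plane through $1$ and $k$, so every $B\in\mathcal{R}$ fixes the origin. The remaining hypotheses of Corollary \ref{inv} — that $\mathfrak{f}$ is invariant under $\mathcal{R}$ via $\{\tau_B\}_{B\in\mathcal{R}}$, that $d\lambda_\infty+d\mathfrak{f}\,\lambda_L=0$, and that $B\lambda_\infty=\lambda_\infty\circ\tau_B$ for every $B\in\mathcal{R}$ — are exactly the conditions assumed in the present statement. Applying Corollary \ref{inv} therefore yields that $\widehat{\mathfrak{f}}=\lambda_\infty\lambda_L^{-1}+\mathfrak{f}$ is a Darboux transform of $\mathfrak{f}$ invariant under $\mathcal{R}$.

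Next I would invoke Theorem \ref{DT}. Since $\mathfrak{f}$ is assumed to be an isothermic GHIMC surface and $\widehat{\mathfrak{f}}$ is assumed to be a classical Darboux transform, the first assertion of Theorem \ref{DT} gives that $\widehat{\mathfrak{f}}$ is a GHIMC surface; combined with the invariance obtained above, this proves the first claim. For the second claim, when $\mathfrak{f}\colon M\to\Img\mathbb{H}$ is an isothermic HIMC surface, the second assertion of Theorem \ref{DT} gives that $\widehat{\mathfrak{f}}$ is a HIMC surface, and the invariance argument is unchanged, so $\widehat{\mathfrak{f}}$ is a HIMC surface invariant under $\mathcal{R}$.

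The only point that requires care — and the closest thing to an obstacle — is that the classical Darboux transform fed into Theorem \ref{DT} must coincide with the invariant Darboux transform produced by Corollary \ref{inv}. This is immediate, since both are the single map $\widehat{\mathfrak{f}}=\lambda_\infty\lambda_L^{-1}+\mathfrak{f}$ built from the same data $(\lambda_\infty,\lambda_L)$; thus the classicality assumption and the invariance conclusion attach to one and the same surface, and no compatibility issue arises.
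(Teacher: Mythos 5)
Your proposal is correct and follows exactly the paper's own argument: the paper's proof likewise applies Corollary \ref{inv} for the $\mathcal{R}$-invariance and Theorem \ref{DT} for the GHIMC (resp.\ HIMC) property of the classical Darboux transform. Your added remark that both results attach to the same map $\widehat{\mathfrak{f}}=\lambda_\infty\lambda_L^{-1}+\mathfrak{f}$ is a reasonable explicit check that the paper leaves implicit.
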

\begin{proof}
By Corollary \ref{inv}, the map $\widehat{\mathfrak{f}}$ is invariant under $\mathcal{R}$. 
By Theorem \ref{DT}, the map $\widehat{\mathfrak{f}}$ is a GHIMC surface. 
By Theorem \ref{DT}, if $\mathfrak{f}$ is a HIMC surface, then $\widehat{\mathfrak{f}}$ is a HIMC surface. 
\end{proof}

All HIMC surfaces of revolution are classified in \cite{BEK97}. 
Let $(x,y)$ be a coordinate of $\mathbb{R}^2$ such that 
$x+yi$ is the standard holomorphic coordinate $\mathbb{C}$. 
\begin{theorem}[\cite{BEK97}, Lemma 2, Theorem 1]\label{cHIMC}
If $\mathfrak{f}\colon\mathbb{C}\to \Img\mathbb{H}$ with the induced metric on $\mathbb{C}$ 
is a surface of revolution, 
then it has a parametrization 
\begin{gather}
\begin{gathered}
\mathfrak{f}(x+yi)=\frac{e^{u(x)/2}\cos (ay)}{a}i+\frac{e^{u(x)/2}\sin (ay)}{a}j+\frac{c(x)}{a}k, \\
u\colon \mathbb{R}\to\mathbb{R},\enskip c\colon \mathbb{R}\to\mathbb{R}, \enskip a\in\mathbb{R}\setminus\{0\}.
\end{gathered}
\label{HIMCR}
\end{gather}
If $\mathfrak{f}\colon  \{x+yi\in\mathbb{C}\,|\,x>0\}\to \Img\mathbb{H}$ with the above parametrization is a HIMC surface, 
then a real-valued function $\phi\colon \mathbb{R}\to\mathbb{R}$ 
defined by 
\begin{gather}
\sin(\phi(x))=\frac{1}{2}c^\prime(x)e^{-u(x)/2},\enskip \cos(\phi(x))=\frac{1}{4}u^\prime(x).\label{phi}
\end{gather}
is a solution to the Painlev\'{e} III equation in trigonometric form  
\begin{gather}
x[\phi^{\prime\prime}(x)-2\sin(2\phi(x))+\phi^\prime(x)+2\sin(\phi(x))]=0. \label{PIII}
\end{gather}

Conversely, let $\phi(x)$ be arbitrary solution to \eqref{PIII} with
$\phi^\prime(x)+2\sin(\phi(x))\neq 0$. 
Then the map $\mathfrak{f}\colon \mathbb{C}\to \Img\mathbb{H}$ defined by \eqref{HIMCR} 
and 
\begin{gather*}
e^{u(x)}=\frac{x^2(\phi^\prime(x)+2\sin(\phi(x)))^2}{4},\\
c(x)=-\frac{x^2}{4}\{(\phi^\prime(x))^2-4[\sin(\phi(x))]^2\}. 
\end{gather*}
is a HIMC surface of revolution. 
\end{theorem}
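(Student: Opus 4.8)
The plan is to prove the two implications separately, using throughout that a conformal surface of revolution can be parametrized so that the rotations act as translations in the imaginary direction. \emph{Normal form \eqref{HIMCR}.} First I would exploit the invariance of $\mathfrak{f}$ under the rotation group $\mathcal{R}$ about the $k$-axis. By the definition of invariance there is a one-parameter group $\{\tau_B\}_{B\in\mathcal{R}}$ of biholomorphisms of $\mathbb{C}$ with $B\mathfrak{f}=\mathfrak{f}\circ\tau_B$; the only one-parameter groups of automorphisms of $\mathbb{C}$ realizing a free circle action are the translations, so after an affine change of coordinate $\tau_B$ is translation in $y$ and the rotation $B_\alpha\in\mathcal{R}$ by angle $\alpha$ corresponds to $y\mapsto y+\alpha/a$ for a constant $a\neq0$. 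Hence $\mathfrak{f}(x+yi)=B_{ay}\mathfrak{f}(x)$. The $y=0$ profile may be rotated into the plane spanned by $i$ and $k$; writing its distance to the axis as $e^{u(x)/2}/a$ and its $k$-height as $c(x)/a$ gives \eqref{HIMCR}. Finally, demanding that $(x,y)$ be isothermal for the induced metric forces $E=G$ for the profile, i.e. the profile speed equals $a$ times its distance to the axis; this is the relation that matches the Pythagorean identity hidden in \eqref{phi}.

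\emph{From HIMC to \eqref{PIII}.} Next I would compute the mean curvature $H$ of \eqref{HIMCR} from its first and second fundamental forms. Introducing the tangent angle $\phi$ of the profile, the conformality relation $E=G$ becomes $\cos\phi=u'/4$ and $\sin\phi=\tfrac12 c'e^{-u/2}$ after the normalization of $a$ dictated by $\cos^2\phi+\sin^2\phi=1$; thus \eqref{phi} merely records that $\phi$ is the profile's tangent angle. A short computation of the two principal curvatures then expresses $H$ as a multiple of $(\phi'+2\sin\phi)e^{-u/2}$. Because $\mathfrak{f}$ is rotationally symmetric, $H$ depends only on $x$, so the condition $d\ast dH^{-1}=0$ of Definition \ref{defGHIMC} reduces, via $\ast\,dx=dy$, to $(H^{-1})''=0$; that is, $H^{-1}$ is affine in $x$, and after translating the harmonic coordinate we may take $H^{-1}$ proportional to $x$ (which is why the relevant domain is $\{x>0\}$). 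Substituting $H^{-1}\propto x$ into the structural identity $u'=4\cos\phi$, equivalently differentiating the relation $e^{u/2}\propto x(\phi'+2\sin\phi)$ and eliminating $u$, yields a second-order ODE for $\phi$ that is exactly \eqref{PIII}.

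\emph{Converse.} For the converse I would run these steps backwards: given a solution $\phi$ of \eqref{PIII} with $\phi'+2\sin\phi\neq0$, define $u$ and $c$ by the displayed formulas, so that \eqref{phi} and hence conformality hold, and build $\mathfrak{f}$ by \eqref{HIMCR}. The only nontrivial point is that the reconstructed profile is consistent, i.e. that its tangent angle really is $\phi$; this amounts to the identity $u'=4\cos\phi$, and verifying it is precisely where \eqref{PIII} enters. Once this holds, computing $H$ from \eqref{HIMCR} gives $H^{-1}$ proportional to $x$, which is harmonic, so $\mathfrak{f}$ is a HIMC surface of revolution.

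The main obstacle is the bookkeeping in the middle step: translating the geometric conditions (conformality together with $H^{-1}$ affine) into the single trigonometric ODE \eqref{PIII} and back, while correctly pinning down the two normalizations, namely the constant $a$ forced by the Pythagorean form of \eqref{phi} and the affine constant of the harmonic function $H^{-1}$, and while excising the locus $\phi'+2\sin\phi=0$ where the parametrization degenerates.
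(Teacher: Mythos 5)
A preliminary point: the paper offers no proof of Theorem \ref{cHIMC} at all --- it is imported verbatim from \cite{BEK97} (Lemma 2 and Theorem 1) --- so your sketch can only be measured against the standard argument of that reference, which it reconstructs essentially correctly. Your key computational claims check out: after the normalization $a=\pm 2$ (which, as you rightly observe, is exactly what makes the conformality condition $E=G$ coincide with the Pythagorean identity implicit in \eqref{phi}), the function $\phi$ is the tangent angle of the profile curve, the principal curvatures are $\phi^\prime e^{-u/2}$ and $2\sin(\phi)e^{-u/2}$, so $2H=(\phi^\prime+2\sin\phi)e^{-u/2}$; rotational invariance makes $H^{-1}$ a function of $x$ alone, harmonicity then reduces to $(H^{-1})^{\prime\prime}=0$, and inserting $H^{-1}=\gamma x$ into the logarithmic derivative of $e^{u/2}=\gamma x(\phi^\prime+2\sin\phi)/2$ together with $u^\prime=4\cos\phi$ yields $x[\phi^{\prime\prime}-2\sin(2\phi)]+\phi^\prime+2\sin(\phi)=0$. (Note that this, rather than \eqref{PIII} as printed --- where the prefactor $x$ multiplies all four terms and is therefore vacuous --- must be the intended equation; that your route produces the correct bracket placement is a useful sanity check.)

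Three places need repair before your outline is a proof. (i) In the normal-form step, the claim that a free circle action by automorphisms of $\mathbb{C}$ must be by translations is not coherent as stated: $S^1$ admits no nontrivial homomorphism into the translation group, so the translations $y\mapsto y+\alpha/a$ only realize a lift of $\mathcal{R}$ along $\mathbb{R}\to S^1$, with $\mathfrak{f}$ then $2\pi/a$-periodic in $y$; and the genuinely different case, in which $\tau_B$ is a rotation about a fixed point $z_0\in\mathbb{C}$ (this occurs, e.g., for the round sphere in stereographic coordinates), is not affinely conjugate to translations --- it requires the non-affine change $w=a^{-1}\log(z-z_0)$, i.e.\ a reparametrization, which the word ``parametrization'' in \eqref{HIMCR} permits but your ``affine change of coordinate'' does not supply. (ii) ``$H^{-1}$ affine in $x$'' includes the slope-zero case: for constant $H$ (Delaunay surfaces, which are HIMC in the sense of Definition \ref{defGHIMC}) the same computation produces the pendulum equation $\phi^{\prime\prime}=2\sin(2\phi)$, not \eqref{PIII}; so ``we may take $H^{-1}$ proportional to $x$'' requires $H$ non-constant, a normalization implicit in the cited statement that your write-up should make explicit. (iii) In the converse you verify only $u^\prime=4\cos\phi$; the companion identity $\sin\phi=\tfrac12 c^\prime e^{-u/2}$ is equally non-automatic and also needs \eqref{PIII} (it does hold: substituting $\phi^{\prime\prime}$ from \eqref{PIII} gives $c^\prime=x\sin(\phi)(\phi^\prime+2\sin\phi)$, whence $\tfrac12 c^\prime e^{-u/2}=\sin\phi$). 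With these three points supplied, your sketch matches the argument of \cite{BEK97}.
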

Then we have a transform of a solution to the Painlev\'{e} III equation in trigonometric form. 
\begin{proof}[Proof of Theorem \ref{tPIII}]
A HIMC surface $\mathfrak{f}$ in Theorem \ref{cHIMC} is invariant under $\mathcal{R}$. 
Because a surface of revolution is an isothermic surface, 
a classical Darboux transform $\widehat{\mathfrak{f}}$ is 
a HIMC surface. 
A HIMC surface $\widehat{\mathfrak{f}}$ is  invariant under $\mathcal{R}$ by Corollary \ref{Dtransr}. 
The function $\widehat{\phi}(x)$ is defined by \eqref{phi} for $\widehat{\mathfrak{f}}$ and 
it is a solution to the the Painlev\'{e} III equation in trigonometric form \eqref{PIII} by Theorem \ref{cHIMC}. 
\end{proof}



\begin{thebibliography}{99}
\bibitem{Bobenko94}
Bobenko, A. I., 
\textit{Surfaces in terms of {$2$} by {$2$} matrices. {O}ld and new
integrable cases}, 
Harmonic maps and integrable systems, 
Aspects Math. E23, 
83--127,
Friedr. Vieweg, Braunschweig, 
1994. 
\bibitem{BE00}
Bobenko, Alexander I. and Eitner, Ulrich, 
\textit{Painlev\'e equations in the differential geometry of surfaces}, 
Lecture Notes in Mathematics 1753,
Springer-Verlag,
Berlin,
2000.
\bibitem{BEK97}
Bobenko, A. and Eitner, U. and Kitaev, A.,
\textit{Surfaces with harmonic inverse mean curvature and {P}ainlev\'e
              equations},
Geom. Dedicata 
68 (1997),
no. 2,
187--227.
\bibitem{Bohle10}
Bohle, Christoph, 
\textit{Constrained {W}illmore tori in the 4-sphere},
J. Differential Geom. 86 (2010), no. 1, 71--131. 
\bibitem{BLPP12}
Bohle, Christoph, Leschke, Katrin, Pedit, Franz and Pinkall, Ulrich, 
\textit{Conformal maps from a 2-torus to the 4-sphere},  
J. Reine Angew. Math. 671 
(2012),
1--30.
\bibitem{BPP09}
Bohle, Christoph, Pedit, Franz and Pinkall, Ulrich,
\textit{The spectral curve of a quaternionic holomorphic line bundle
              over a 2-torus},
Manuscripta Math. 130 (2009),  
no. 3, 
311--352. 
\bibitem{BFLPP02}
Burstall, F. E. and Ferus, D. and Leschke, K. and Pedit, F.
              and Pinkall, U.,
\textit{Conformal geometry of surfaces in {${\it S}^4$} and
              quaternions}, 
Lecture Notes in Mathematics 
1772,
Springer-Verlag,
Berlin,
2002. 
\bibitem{CLP}
Carberry, E., Leschke, K. and Pedit, F.,  
\textit{Darboux transforms and spectral curves of constant mean curvature surfaces revisited},  
to appear in Ann. Glob. Anal. Geom., DOI:10.1007/s10455-012-9347-8. 
\bibitem{HP97}
Hertrich-Jeromin, Udo and Pedit, Franz,
\textit{Remarks on the {D}arboux transform of isothermic surfaces},
Doc. Math. 2 (1997), 
313--333. 
\bibitem{KPP98}
Kamberov, George, Pedit, Franz and Pinkall, Ulrich,
\textit{Bonnet pairs and isothermic surfaces},
Duke Math. J. 92 (1998), no. 3, 637--644. 
\bibitem{Korotkin99}
Korotkin, D. A., 
\textit{On some integrable cases in surface theory}, 
Differential geometry, Lie groups and mechanics. Part~15--1, 
Zap. Nauchn. Sem. POMI 234, 65--124, POMI, 
St.~Petersburg, 1996. 
\bibitem{LP08}
Leschke, Katrin and Pedit, Franz,
\textit{Sequences of {W}illmore surfaces}, 
Math. Z. 259 (2008), no. 1,
113--122. 
\bibitem{LP05}
Leschke, K. and Pedit, F., 
\textit{B\"acklund transforms of conformal maps into the 4-sphere}, 
P{DE}s, submanifolds and affine differential geometry,
Banach Center Publ.,
69,
103--118,
Polish Acad. Sci.,
Warsaw,
2005. 
\bibitem{PP98}
Pedit, Franz and Pinkall, Ulrich,
\textit{Quaternionic analysis on {R}iemann surfaces and differential
              geometry},
Proceedings of the {I}nternational {C}ongress of
              {M}athematicians, {V}ol. {II} ({B}erlin, 1998),
Doc. Math. (1998),
Extra Vol. II,
389--400 (electronic). 
\bibitem{Sym85}
Sym, Antoni,
\textit{Soliton surfaces and their applications (soliton geometry from
              spectral problems)},
Geometric aspects of the {E}instein equations and integrable
              systems ({S}cheveningen, 1984),
Lecture Notes in Phys. 239,
154--231,
Springer,
Berlin,
1985.
\end{thebibliography}
\end{document}